\newcommand{\mref}[1]{(\ref{#1})}
\newcommand{\BIC}{\operatorname{BIC}}
\newcommand{\dd}{\,d}
\newcommand{\G}{\mathcal{G}}
\newcommand{\Gp}{\mathcal{G}^{+}}
\newcommand{\Gm}{\mathcal{G}^{-}}
\newcommand{\E}{\operatorname{E}}
\renewcommand{\H}{\operatorname{H}}
\newcommand{\HG}{\operatorname{H}^{\G}}
\newcommand{\D}{\mathcal{D}}
\newcommand{\B}{\mathcal{B}}
\newcommand{\X}{\mathbf{X}}
\newcommand{\PX}{\Pi_{X}}
\newcommand{\PXGm}{\Pi_{X}^{\Gm}}
\newcommand{\PXGp}{\Pi_{X}^{\Gp}}
\newcommand{\PXi}{\Pi_{X_i}}
\newcommand{\PXiG}{\Pi_{X_i}^{\G}}
\newcommand{\PXiGm}{\Pi_{X_i}^{\Gm}}
\newcommand{\PXiGp}{\Pi_{X_i}^{\Gp}}
\newcommand{\XP}{X \given \PX}
\newcommand{\XPGm}{X \given \PXGm}
\newcommand{\XPGp}{X \given \PXGp}
\newcommand{\XPi}{X_i \given \PXi}
\newcommand{\XPiG}{X_i \given \PXiG}
\newcommand{\XPiGm}{X_i \given \PXiGm}
\newcommand{\XPiGp}{X_i \given \PXiGp}
\newcommand{\T}{\Theta_{X_i}}
\newcommand{\TT}{\T \given \PXi}
\newcommand{\BD}{\operatorname{BD}}
\newcommand{\BDeu}{\operatorname{BDeu}}
\newcommand{\BDs}{\operatorname{BDs}}
\newcommand{\indep}{\perp\hspace{-0.21cm}\perp}
\newcommand{\given}{\operatorname{|}}
\newcommand{\Prob}{\operatorname{P}}
\newcommand{\tq}{\tilde{q}_i}
\newcommand{\aij}{\alpha_{ij}}
\newcommand{\aijk}{\alpha_{ijk}}
\newcommand{\piijk}{\pi_{ik \given j}}
\newcommand{\pijk}{p_{ik \given j}}
\newcommand{\jpo}{j: n_{ij} > 0}
\newcommand{\fh}{\sfrac{1}{2}}
\newcommand{\Dir}{\mathit{Dirichlet}}
\newcommand{\Mul}{\mathit{Multinomial}}
\newcommand{\bpi}{\boldsymbol\pi}
\newcommand{\api}{\boldsymbol\alpha}
\newcommand{\e}[1]{\times 10^{#1}}
\newcommand{\dEP}[1]{d_\mathrm{EP}^{(#1)}}
\journalname{Behaviormetrika}
\begin{document}

\title{Dirichlet Bayesian Network Scores and the Maximum Relative Entropy
  Principle}

\author{Marco Scutari}

\institute{M. Scutari \at
           Department of Statistics, University of Oxford,
           24--29 St. Giles', Oxford OX1 3LB, United Kingdom.
           \email{scutari@stats.ox.ac.uk}
}

\date{Received: \ldots / Accepted: \ldots}

\maketitle

\begin{abstract}%
  A classic approach for learning Bayesian networks from data is to identify a
  \emph{maximum a posteriori} (MAP) network structure. In the case of discrete
  Bayesian networks, MAP networks are selected by maximising one of several
  possible Bayesian Dirichlet (BD) scores; the most famous is the \emph{Bayesian
  Dirichlet equivalent uniform} (BDeu) score from \citet{heckerman}. The key
  properties of BDeu arise from its uniform prior over the parameters of each
  local distribution in the network, which makes structure learning
  computationally efficient; it does not require the elicitation of prior
  knowledge from experts; and it satisfies score equivalence.

  In this paper we will review the derivation and the properties of BD scores,
  and of BDeu in particular, and we will link them to the corresponding entropy
  estimates to study them from an information theoretic perspective. To this
  end, we will work in the context of the foundational work of \citet{caticha},
  who showed that Bayesian inference can be framed as a particular case of the
  maximum relative entropy principle. We will use this connection to show that
  BDeu should not be used for structure learning from sparse data, since it
  violates the maximum relative entropy principle; and that it is also
  problematic from a more classic Bayesian model selection perspective, because
  it produces Bayes factors that are sensitive to the value of its only
  hyperparameter. Using a large simulation study, we found in our previous work
  \citep{jmlr16} that the Bayesian Dirichlet sparse (BDs) score seems to provide
  better accuracy in structure learning; in this paper we further show that
  BDs does not suffer from the issues above, and we recommend to use it for
  sparse data instead of BDeu. Finally, will show that these issues are in fact
  different aspects of the same problem and a consequence of the distributional
  assumptions of the prior.

\keywords{Bayesian networks \and Structure learning \and Bayesian posterior
  estimation \and Maximum relative entropy principle \and Discrete data}

\end{abstract}

\section{Introduction and Background}

Bayesian networks \citep[BNs;][]{pearl,koller} are probabilistic graphical
models based on a directed acyclic graph (DAG) $\G$ whose nodes are associated
with a set of random variables $\X = \{X_1, \ldots, X_N\}$ following some
distribution $\Prob(\X)$. (The two are referred to interchangeably.) Formally,
$\G$ is defined as an \emph{independency map} of $\Prob(\X)$ such that
\begin{equation*}
  \X_A \indep_G \X_B \given \X_C \Longrightarrow
    \X_A \indep_P \X_B \given \X_C,
\end{equation*}
where $\X_A$, $\X_B$ and $\X_C$ are disjoint subsets of $\X$. In other words,
\textit{graphical separation} (denoted $\indep_G$, and called
\emph{d-separation} in this context) between two nodes in $\G$ implies the
\textit{conditional independence} (denoted $\indep_P$) of the corresponding
variables in $\X$. Two nodes linked by an arc cannot be graphically separated;
hence the arcs of $\G$ represent \emph{direct dependencies} between the
variables they are incident on, as opposed to \emph{indirect dependencies} that
are mediated by one or more nodes in $\G$.

A consequence of this definition is the Markov property \citep{pearl}: in the
absence of missing data the \emph{global distribution} of $\X$ decomposes into
\begin{equation}
\label{eq:parents}
  \Prob(\X \given \G) = \prod_{i=1}^N \Prob(\XPiG)
\end{equation}
where the \emph{local distribution} of each node $X_i$ depends only on the
values of its parents $\PXi$ in $\G$ (denoted $\PXiG$). In this paper we will
focus on discrete BNs \citep{heckerman}, in which both $\X$ and the $X_i$ are
multinomial random variables; in particular
\begin{equation*}
  \XPiG \sim \Mul(\TT^\G)
\end{equation*}
where each parameter set $\T$ comprises the conditional probabilities
\begin{equation*}
  \piijk = \Prob(X_i = k \given \PXiG = j)
\end{equation*}
of each value $k$ of $X_i$ given each possible configuration of the values of
$\PXiG$. Other possibilities include Gaussian BNs \citep{heckerman3} and
conditional linear Gaussian BNs \citep{lauritzen}. In Gaussian BNs, $\X$ is
multivariate normal and the (conditional) dependencies between the $X_i$ are
assumed to be linear, leading to
\begin{equation*}
  \XPiG \sim N(\TT^\G)
\end{equation*}
which can be written as a linear regression model of the form
\begin{align*}
  &X_i = \mu_{X_i} + \PXiG \beta_{X_i} + \varepsilon,&
  &\varepsilon \sim N(0, \sigma^2_{X_i})
\end{align*}
or using the partial correlations between $X_i$ and each parent given the rest;
the two parameterisations are equivalent \citep{weatherburn}. Conditional linear
Gaussian BNs combine multinomial and normal variables using mixture of normals,
with discrete variables identifying the components of the mixture.

It is important to note that the decomposition in \mref{eq:parents} does not
uniquely identify a BN; different DAGs can encode the same global distribution,
thus grouping BNs into equivalence classes \citep{chickering} characterised by
the skeleton of $\G$ (its underlying undirected graph) and its v-structures
(patterns of arcs of the type $X_j \rightarrow X_i \leftarrow X_k$, with no arc
between $X_j$ and $X_k$). Intuitively, the direction of arcs that are not part
of v-structures can be reversed without changing the global distribution, just
factorising it in different ways, as long as the new arc directions do not
introduce additional v-structures or cycles in the DAG. As a simple example,
consider
\begin{multline*}
  \underbrace{\Prob(X_i)\Prob(X_j \given X_i)\Prob(X_k \given X_j)}_{X_i \rightarrow X_j \rightarrow X_k} =
  \Prob(X_j, X_i)\Prob(X_k \given X_j) = \\ =
  \underbrace{\Prob(X_i \given X_j)\Prob(X_j)\Prob(X_k \given X_j)}_{X_i \leftarrow X_j \rightarrow X_k}.
\end{multline*}

The task of specifying BNs is called \emph{learning} and can be performed using
either data, prior expert knowledge on the phenomenon being modelled or both. The
latter has been shown to provide very good results in a variety of applications,
and should be preferred if it is feasible to elicit prior information from
experts \citep{csprior,mukherjee}. Learning BNs from data is usually performed
in an inherently Bayesian fashion by maximising
\begin{align}
\label{eq:lproc}
  &\underbrace{\Prob(\B \given \D) =
  \Prob(\G, \Theta \given \D)}_{\text{learning}}& &=&
    &\underbrace{\Prob(\G \given \D)}_{\text{structure learning}}& &\cdot&
    &\underbrace{\Prob(\Theta \given \G, \D)}_{\text{parameter learning}},
\end{align}
where $\D$ is a sample from $\X$ and $\B = (\G, \Theta)$ is a BN with DAG $\G$
and parameter set $\Theta = \bigcup_{i=1}^N \T$. Structure learning consists in
finding the DAG $\G$ that encodes the dependence structure of the data;
parameter learning involves the estimation of the parameters $\Theta$ given
$\G$. Expert knowledge can be incorporated in either or both these steps through
the use of informative priors for $\G$ or $\Theta$.

Structure learning can be implemented in several ways, based on many results
from probability, information and optimisation theory; algorithms for this task
can be broadly grouped into constraint-based, score-based and hybrid.

\textit{Constraint-based algorithms} \citep{hiton1,hiton2} use statistical tests
to learn conditional independence relationships from the data and to determine
if the corresponding arcs should be included in $\G$. In order to do that
they assume that $\G$ is \emph{faithful} to $\Prob(\X)$, meaning
\begin{equation*}
  \X_A \indep_G \X_B \given \X_C \Longleftrightarrow
    \X_A \indep_P \X_B \given \X_C;
\end{equation*}
this is a strong assumption that does not hold in a number of real-world
scenarios, which are reviewed in \citet{koski}. Depending on the nature of the
data, conditional independence tests in common use are the mutual information
($G^2$) and Pearson's $\chi^2$ tests for contingency tables (for discrete BNs);
and Fisher's $Z$ and the exact $t$ tests for partial correlations (for Gaussian
BNs); an overview is provided in \citet{crc13}.

\textit{Score-based algorithms} are closer to model selection techniques
developed in classic statistics and information theory. Each candidate network
is assigned a score reflecting its goodness of fit, which is then taken as an
objective function to maximise. This is often done using heuristic optimisation
algorithms, from local search to genetic algorithms \citep{norvig}; but the
availability of computational resources and advances in learning algorithms have
recently made exact learning possible \citep{cutting}. Common choices for the
network score include the Bayesian Information Criterion (BIC) and the marginal
likelihood $\Prob(\G \given \D)$ itself; for an overview see again \citet{crc13}.
We will cover both in more detail for discrete BNs in Section \ref{sec:bd}.

\textit{Hybrid algorithms} use both statistical tests and score functions,
combining the previous two families of algorithms. Their general formulation is
described for BNs in \citet{sparse}; they have proved to be some of the top
performers up to date \citep[see for instance MMHC in][]{mmhc}.

As for parameter learning, the parameters $\T$ can be estimated independently
for each node following \mref{eq:parents} since its parents are assumed to be
known from structure learning. Both maximum likelihood and Bayesian posterior
estimators are in common use, with the latter being preferred due to their
smoothness and superior predictive power \citep{koller}.

In this paper we focus on score-based structure learning in a Bayesian
framework, in which we aim to identify a \emph{maximum a posteriori} (MAP) DAG
$\G$ that directly maximises $\Prob(\G \given \D)$. For discrete BNs, this means
maximising a Bayesian Dirichlet (BD) marginal likelihood: the most common choice
is the \emph{Bayesian Dirichlet equivalent uniform} (BDeu) score from
\citet{heckerman}. We will show that the uniform prior distribution over each
$\T$ that underlies BDeu can be problematic from a Bayesian perspective,
resulting in wildly different Bayes factors (and thus structure learning
outcomes) depending on the value of its only hyperparameter, the imaginary
sample size. We will further investigate this problem from an information
theoretic perspective, on the grounds that Bayesian posterior inference can be
framed as a particular case of the maximum relative entropy principle
\citep[ME;][]{shore,skilling,caticha2}. We find that BDeu is not a reliable
network score when applied to sparse data because it can select overly complex
networks over simpler ones given the same information in the prior and in the
data; and that in the process it violates the maximum relative entropy
principle. That does not appear to be the case for other BD scores, which arise
from different priors.

The paper is organised as follows. In Section \ref{sec:bd} we will review
Bayesian score-based structure learning and BD scores. In Section
\ref{sec:problems} we will focus on BDeu, covering its underlying assumptions
and issues reported in the literature. In particular, we will show with simple
examples that BDeu can produce Bayes factors that are sensitive to the choice of
its only hyperparameter, the imaginary sample size. In Section \ref{sec:maxent}
we will derive the posterior expected entropy associated with a DAG $\G$, which
we will further explore in Section \ref{sec:expent}. Finally, in Section
\ref{sec:bd-maxent} we will analyse BDeu using ME, and we will compare its
behaviour with that of other BD scores.

\section{Bayesian Dirichlet Marginal Likelihoods}
\label{sec:bd}

Score-based structure learning, when performed in a Bayesian framework, aims at
finding a DAG $\G$ that has the highest posterior probability
$\Prob(\G \given \D)$. Starting from \mref{eq:lproc}, using Bayes' theorem we
can write
\begin{equation*}
  \Prob(\G \given \D) \propto \Prob(\G)\Prob(\D \given \G) =
    \Prob(\G)\int \Prob(\D \given \G, \Theta) \Prob(\Theta \given \G) \dd\Theta
\end{equation*}
where $\Prob(\G)$ is the prior distribution over the space of the DAGs spanning
the variables in $\X$ and $\Prob(\D \given \G)$ is the marginal likelihood of
the data given $\G$ averaged over all possible parameter sets $\Theta$.
$\Prob(\G)$ is often taken to be uniform so that it simplifies when comparing
DAGs; we will do the same in this paper for simplicity while noting that other
default priors may lead to more accurate structure learning of sparse DAGs
\citep[\emph{e.g.}][]{jmlr16}. Using \mref{eq:parents} we can then decompose
$\Prob(\D \given \G)$ into one component for each node as follows:
\begin{equation}
  \Prob(\D \given \G) = \prod_{i=1}^N \Prob(\XPiG) =
    \prod_{i=1}^N \left[ \int \Prob(\XPiG, \T)
    \Prob(\TT^\G) \dd\T \right].
\label{eq:structlearn}
\end{equation}
In the case of discrete BNs, we assume  $\XPiG \sim \Mul(\TT^\G)$ where the
parameters $\TT^\G$ are the conditional probabilities $\piijk = \Prob(X_i = k
\given \PXiG = j)$. We then assume a conjugate prior $\TT^\G \sim \Dir(\aijk)$,
$\sum_{jk} \aijk = \alpha_i > 0$ to obtain the closed-form posterior
$\Dir(\aijk + n_{ijk})$ which we use to estimate the $\piijk$ from the counts
$n_{ijk}, \sum_{ijk} n_{ijk} = n$ observed in $\D$. $\alpha_i$ is known as the
\emph{imaginary} or \emph{equivalent sample size} and determines how much weight
is assigned to the prior in terms of the size of an imaginary sample supporting
it.

Further assuming \emph{positivity} ($\piijk > 0$), \emph{parameter independence}
($\piijk$ for different parent configurations are independent),
\emph{parameter modularity} ($\piijk$ associated with different nodes are
independent) and \emph{complete data}, \citet{heckerman} derived
a closed form expression for \mref{eq:structlearn}, known as the \emph{Bayesian
Dirichlet} (BD) family of scores:
\begin{multline}
  \BD(\G, \D; \api) =
  \prod_{i=1}^N \BD\left(\XPiG; \alpha_i\right) \\ =
  \prod_{i=1}^N \prod_{j = 1}^{q_i}
    \left[
      \frac{\Gamma(\alpha_{ij})}{\Gamma(\alpha_{ij} + n_{ij})}
      \prod_{k=1}^{r_i} \frac{\Gamma(\aijk + n_{ijk})}{\Gamma(\aijk)}
    \right]
\label{eq:bd}
\end{multline}
where:
\begin{itemize}
  \item $r_i$ is the number of states of $X_i$
  \item $q_i$ is the number of possible configurations of values of $\PXiG$,
    taken to be equal to $1$ if $X_i$ has no parents;
  \item $n_{ij} = \sum_{k = 1}^{r_i} n_{ijk}$;
  \item $\aij = \sum_{k = 1}^{r_i} \aijk$;
  \item and $\api = \{\alpha_1, \ldots, \alpha_N\}$,
    $\alpha_i = \sum_{j = 1}^{q_i} \aij$ are the imaginary sample
    sizes associated with each $X_i$.
\end{itemize}
Various choices for $\aijk$ produce different priors and the corresponding
scores in the BD family:
\begin{itemize}
  \item for $\aijk = 1$ we obtain the K2 score from \citet{k2};
  \item for $\aijk = \fh$ we obtain the BD score with Jeffrey's prior
    \citep[BDJ;][]{suzuki16};
  \item for $\aijk = \alpha / (r_i q_i)$ we obtain the BDeu score from
    \citet{heckerman}, which is the most common choice in the BD family and
    has $\alpha_i = \alpha$ for all $X_i$;
  \item for $\aijk = \alpha / (r_i \tq)$, where $\tq$ is the number of
    $\PXiG$ such that $n_{ij} > 0$, we obtain the BD sparse (BDs) score recently
    proposed in \citet{jmlr16};
  \item for the set $\aijk^s = s / (r_i q_i)$, $s \in S_L = \{2^{-L}, 2^{-L+1},
    \ldots, 2^{L-1}, 2^{L}\}$, $L \in \mathbb{N}$ we obtain the locally averaged
    BD score (BDla) from \citet{bdla}.
\end{itemize}
BDeu is the only score-equivalent BD score \citep{chickering}, that is, it is
the only BD score that takes the same value for DAGs in the same equivalence
class. This property makes BDeu the preferred score when arcs are given causal
interpretation \citep{causality}, and their directions have a meaningful
interpretation beyond allowing to decompose the $\Prob(\X)$ into the
$\Prob(\XPiG)$. BDs is only asymptotically score-equivalent because it converges
to BDeu when $n \to \infty$ and the positivity assumption holds. The BIC score,
defined as
\begin{equation}
  \BIC(\G, \D) = \sum_{i = 1}^N \BIC\left(\XPiG\right)
    = \sum_{i = 1}^N \left[ \log\Prob(\XPiG) - \frac{\log n}{2}q_i (r_i - 1)\right]
\end{equation}
is also score-equivalent and it converges to $\log\BDeu$ as $n \to \infty$. In
the case of discrete BNs, maximising BIC corresponds to selecting the BN with
the \emph{minimum description length} \citep[MDL;][]{mdl}.

\section{BDeu and Bayesian Model Selection}
\label{sec:problems}

The uniform prior associated with BDeu has been justified by the lack of prior
knowledge on the $\T$, as well as its computational simplicity and
score equivalence; and it was widely assumed to be non-informative
\citep[\emph{e.g.}][]{silander,heckerman}.

However, there is increasing evidence that this prior is far from
non-informative and that it has a strong impact on the accuracy of the learned
DAGs, making its use on real-world data problematic. \citet{silander} showed via
simulation that the MAP DAGs selected using BDeu are highly sensitive to the
choice of $\alpha$. Even for ``reasonable'' values such as $\alpha \in [1, 20]$,
they obtained DAGs with markedly different number of arcs, and they showed that
large values of $\alpha$ tend to produce DAGs with more arcs. This is
counter-intuitive because a larger $\alpha$ would normally imply stronger
regularisation and would be expected to produce sparser DAGs. \citet{jaakkola}
similarly showed that the number of arcs in the MAP DAG is determined by a
complex interaction between $\alpha$ and $\D$; in the limits $\alpha \to 0$ and
$\alpha \to \infty$ it is possible to obtain both very sparse and very dense
DAGs. (We informally define $\G$ to be \emph{sparse} if $|A| = O(N)$, typically
with $|A| < 5N$; a \emph{dense} $\G$, on the other hand, has a relatively large
$|A|$ compared to $N$.) In particular, for small values of $\alpha$ and/or
sparse data (that is, discrete data for which we observe a small subset of the
possible combinations of the values of the $X_i$), $\aijk \to 0$ and
\begin{equation}
  \lim_{\aijk \to 0} \BDeu(\G, \D; \alpha) - \alpha^{\dEP{\G}} = 0
\label{eq:convergence}
\end{equation}
where $\dEP{\G}$ is the effective number of parameters of the model, defined as
\begin{equation*}
  \dEP{\G} =  \sum_{i = i}^N \dEP{X_i, \G} = \sum_{i = i}^N
    \left[ \sum_{j = 1}^{q_i} \tilde{r}_{ij} - \tq \right];
\end{equation*}
$\tilde{r}_{ij}$ is the number of positive counts $n_{ijk}$ in the $j$th
configuration of $\PXiG$ and $\tq$ is the number of configurations in which at
least one $n_{ijk}$ is positive.

This was then used to prove that the Bayes factor
\begin{equation}
  \frac{\Prob(\D \given \Gp)}{\Prob(\D \given \Gm)} =
  \frac{\BDeu(\XPiGp; \alpha)}{\BDeu(\XPiGm; \alpha)}
  \to
  \left\{
  \begin{aligned}
  0 & & \text{if $d_\mathrm{EDF} > 0$}\\
  +\infty & & \text{if $d_\mathrm{EDF} < 0$}
  \end{aligned}
  \right.
\label{eq:bdeu-ratio}
\end{equation}
for two DAGs $\Gm$ and $\Gp$ that differ only by the inclusion of a single
parent for $X_i$. The effective degrees of freedom
$d_\mathrm{EDF}$ are defined as $\dEP{\Gp} - \dEP{\Gm}$. The practical
implication of this result is that, when we compare two DAGs using their BDeu
 scores, a large number of zero counts will force $d_\mathrm{EDF}$ to be negative
and favour the inclusion of additional arcs (since $\BDeu(\XPiGp; \alpha) \gg
\BDeu(\XPiGm; \alpha)$).
But that in turn makes $d_\mathrm{EDF}$ even more negative, quickly leading to
overfitting. Furthermore, \citet{jaakkola} argued that BDeu can be rather
unstable for ``medium-sized'' data and small $\alpha$, which is a very common
scenario.

\citet{alphastar} approached the problem from a different perspective and
derived an analytic approximation for the ``optimal'' value of $\alpha$ that
maximises predictive accuracy, further suggesting that the interplay between
$\alpha$ and $\D$ is controlled by the skewness of the $\T$ and by the strength
of the dependence relationships between the nodes. Skewed $\T$ result in some
$\piijk$ being smaller than others, which in turn makes sparse data sets more
likely; hence the problematic behaviour described in \citet{jaakkola} and
reported above. Most of these results have been analytically confirmed more
recently by \citet{ueno,ueno2}. The key insight provided by the latter paper is
that we can decompose the BDeu into a \emph{likelihood term} that depends on the
data and a \emph{prior term} that does not:
\begin{multline*}
  \log\BDeu(\XPiG; \alpha) =
    \underbrace{\sum_{j=1}^{q_i}
      \left( \log\Gamma(\aij) - \sum_{k=1}^{r_i} \log\Gamma(\aijk) \right)
    }_{\text{prior term}} + \\
    \underbrace{\sum_{j=1}^{q_i}
      \left(\sum_{k=1}^{r_i} \log\Gamma(\aijk + n_{ijk}) - \log\Gamma(\aij + n_{ij}) \right)
    }_{\text{likelihood term}}.
\end{multline*}
Then if $\aijk < 1$ (that is, $\alpha < r_i q_i$) the prior term can be
approximated by
\begin{equation*}
  \sum_{j=1}^{q_i}
    \left( \log\Gamma(\aij) - \sum_{k=1}^{r_i} \log\Gamma(\aijk) \right) \approx
  q_i (r_i - 1) \log \aijk
\end{equation*}
and quickly dominates the likelihood term, penalising complex BNs as the number
of parameters increases, which explains why BDeu selects empty DAGs in the limit
of $\aijk \to 0$. On the other hand, if all $\aijk > 1$ then the prior term can
be approximated by
\begin{equation*}
  \sum_{j=1}^{q_i}
    \left( \log\Gamma(\aij) - \sum_{k=1}^{r_i} \log\Gamma(\aijk) \right) \approx
  \alpha\log r_i + \frac{1}{2} q_i(r_i - 1)\log\frac{\aijk}{2\pi},
\end{equation*}
leading BDeu to select a complete DAG when $\aijk \to \infty$ (and therefore
$\alpha \to \infty$) as previously reported in \citet{silander}.

As for the likelihood term, \citet{ueno2} notes that if $\alpha + n$ is
sufficiently small, that is, for sparse samples and small imaginary sample sizes,
\begin{equation*}
  \sum_{j=1}^{q_i}
      \left(\sum_{k=1}^{r_i} \log\Gamma(\aijk + n_{ijk}) - \log\Gamma(\aij + n_{ij}) \right)
  \approx - q_i(r_i - 1)\log\aijk.
\end{equation*}
Hence if some $n_{ijk} = 0$, the change of the likelihood term dominates the
prior term and BDeu adds extra arcs, leading to dense DAGs. On the other hand,
if $\alpha + n$ is sufficiently large, $\alpha$ actually acts as an imaginary
sample supporting the uniform distribution of the parameters assumed in the
prior. This explains the observations in \citet{alphastar}: the optimal $\alpha$
should be large when the empirical distribution of the $\T$ is uniform because
the prior is correct; and it should be small when the empirical distribution of
$\T$ is skewed so that the prior can be quickly dominated. This is also the
source of the sensitivity of BDeu to the choice of $\alpha$ reported in
\citet{jaakkola}.

Finally, \citet{suzuki16} studied the asymptotic properties of BDeu by
contrasting it with BDJ. He found that BDeu is not \emph{regular} in the sense
that it may learn DAGs in a way that is not consistent with either the MDL
principle (through BIC) or the ranking of those DAGs given by their entropy.
Whether this happens depends on the values of the underlying $\piijk$, even if
the positivity assumption holds and if $n$ is large. This agrees with the
observations in \citet{ueno}, who also observed that BDeu is not necessarily
consistent for any finite $n$, but only asymptotically for $n \to \infty$.

Around the same time, a possible solution to these problems was proposed by
\citet{jmlr16} in the form of BDs. \citet{jmlr16} starts from the consideration
that if the sample $\D$ is sparse, some configurations of the variables will not
be observed; it may be that the sample size is small and those configurations
have low probability, or it may be that $\X$ violates the positivity assumption
($\piijk = 0$ for some $i,j,k$). As a result, we may be unable to observe all
the configurations of (say) $\PXiGp$ in the data. Then the corresponding
$n_{ij} = 0$ and
\begin{multline*}
  \BDeu(\XPiGp; \alpha) = \\
    \prod_{j : n_{ij} = 0} \left[
    \cancel{
      \frac{\Gamma(r_i \aijk)}{\Gamma(r_i \aijk)}
      \prod_{k=1}^{r_i} \frac{\Gamma(\aijk)}{\Gamma(\aijk)}
    }
    \right]
    \prod_{\jpo} \left[
      \frac{\Gamma(r_ i \aijk)}{\Gamma(r_i \aijk + n_{ij})}
      \prod_{k=1}^{r_i} \frac{\Gamma(\aijk + n_{ijk})}{\Gamma(\aijk)}
    \right].
\end{multline*}
The effective imaginary sample size, defined as the sum of the $\aijk$ appearing
in terms that do not simplify (and thus contribute to the value of BDeu),
decreases to $\sum_{\jpo} \aijk = \alpha (\tq / q_i) < \alpha$, where $\tq < q_i$
is the number of parent configurations that are actually observed in $\D$. In
other words, BDeu is computed with an imaginary sample size of
$\alpha (\tq / q_i)$ instead of $\alpha$, and the remaining
$\alpha (q_i - \tq) / q_i$ is effectively lost. This may lead to comparing DAGs
with marginal likelihoods computed from different priors, which is incorrect
from a Bayesian perspective. In order to prevent this from happening,
\citet{jmlr16} replaced the prior of BDeu with
\begin{align*}
  &\alpha_{ijk} = \left\{
    \begin{aligned}
      &\alpha / (r_i \tq)& & \text{if $n_{ij} > 0$} \\
      &0                         & & \text{otherwise.}
    \end{aligned}
  \right.& &\text{where}&
  &\tq = \{ \text{number of $\PXiGp$ such that $n_{ij} > 0$} \}.
\end{align*}
thus obtaining
\begin{equation}
  \BDs(\XPiGp; \alpha) =
    \prod_{j : n_{ij} > 0}
    \left[
      \frac{\Gamma(r_i \aijk)}{\Gamma(r_i \aijk + n_{ij})}
      \prod_{k=1}^{r_i} \frac{\Gamma(\aijk + n_{ijk})}{\Gamma(\aijk)}
    \right].
\label{eq:bds}
\end{equation}
A large simulation study showed BDs to be more accurate than BDeu in learning BN
structures without any loss in predictive power.

In addition to all these issues, we also find that BDeu produces Bayes factors
that are sensitive to the choice of $\alpha$. (The fact that BDeu is sensitive
to the value of $\alpha$ does not necessarily imply that the Bayes factor is
sensitive itself.) In order to illustrate this instability and the other results
presented in the section we consider the simple examples below.

\begin{figure}[t]
\begin{center}
  \includegraphics[width=0.9\textwidth]{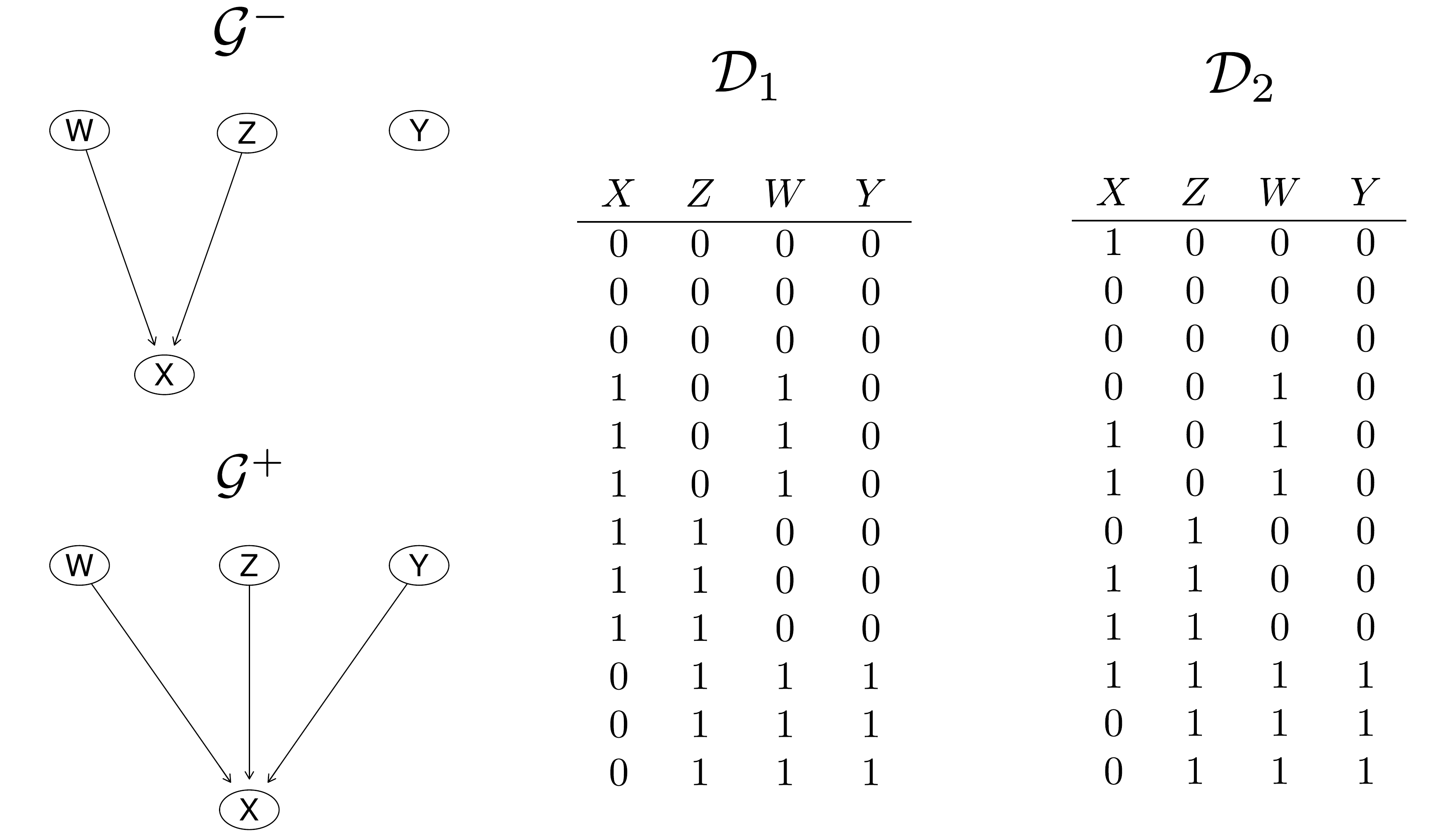}
  \caption{DAGs and data sets used in Examples \ref{ex:singular} and
    \ref{ex:nonsingular}. The DAGs $\Gm$ and $\Gp$ are used in both examples.
    Example \ref{ex:singular} uses data set $\D_1$, while Example
    \ref{ex:nonsingular} uses $\D_2$; the latter is a modified version of the
    former, which is originally from \citet{suzuki16}.}
  \label{fig:examples}
\end{center}
\end{figure}

\begin{example}
\label{ex:singular}
Consider the DAGs $\Gm$ and $\Gp$ and the data set $\D_1$ in Figure
\ref{fig:examples}, originally from \citet{suzuki16}. The sample frequencies
$n_{ijk}$ for $\XP^{\Gm} = \{ Z, W \}$ are:
\begin{center}
\begin{tabular}{rr|cccc}
  \multicolumn{2}{r|}{$Z, W$} & $0, 0$ & $1, 0$ & $0, 1$ & $1, 1$ \\
  \hline
  \multirow{2}{*}{$X$}
    & $0$ & $3$ & $0$ & $0$ & $3$ \\
    & $1$ & $0$ & $3$ & $3$ & $0$
\end{tabular}
\end{center}
and those for $\XP^{\Gp} = \XP^{\Gm} \cup \{ Y \}$ are as follows.
\begin{center}
\begin{tabular}{rr|cccccccc}
    \multicolumn{2}{r|}{$Z, W, Y$} & $0, 0, 0$ & $1, 0, 0$ & $0, 1, 0$ & $1, 1, 0$
      & $0, 0, 1$ & $1, 0, 1$ & $0, 1, 1$ & $1, 1, 1$ \\
  \hline
  \multirow{2}{*}{$X$}
    & $0$ & $3$ & $0$ & $0$ & $0$ & $0$ & $0$ & $0$ & $3$ \\
    & $1$ & $0$ & $3$ & $3$ & $0$ & $0$ & $0$ & $0$ & $0$
\end{tabular}
\end{center}

The conditional distributions of $\XP^{\Gm}$ and $\XP^{\Gp}$ are both singular,
and in $\XP^{\Gp}$ we only observe $4$ parent configurations out of $8$.
Furthermore, the observed conditional distributions for those parent
configurations are identical to the $4$ conditional distributions in $\XP^{\Gm}$,
since the $n_{ijk}$ are the same. We can then argue that $\XP^{\Gp}$ does not
fit $\D_1$ any better than $\XP^{\Gm}$, and it does not capture any additional
information from the data.

However, if we take $\alpha = 1$ in BDeu, then $\aijk = 1/8$ for $\Gm$ and
$\aijk = 1/16$ for $\Gp$, leading to
\begin{align*}
  &\BDeu\left(\XP^{\Gm}; 1\right) =
    \left( \frac{\Gamma(\sfrac{1}{4})}{\Gamma(\sfrac{1}{4}+3)}
    \left[ \frac{\Gamma(\sfrac{1}{8} + 3)}{\Gamma(\sfrac{1}{8})} \cdot
           \cancel{\frac{\Gamma(\sfrac{1}{8})}{\Gamma(\sfrac{1}{8})}} \right] \right)^4
    = 0.0326, \\
  &\BDeu\left(\XP^{\Gp}; 1\right) =
    \left( \frac{\Gamma(\sfrac{1}{8})}{\Gamma(\sfrac{1}{8}+3)}
    \left[ \frac{\Gamma(\sfrac{1}{16} + 3)}{\Gamma(\sfrac{1}{16})} \cdot
           \cancel{\frac{\Gamma(\sfrac{1}{16})}{\Gamma(\sfrac{1}{16})}} \right] \right)^4
    = 0.0441.
\end{align*}
If we choose the DAG with the highest BDeu score, we prefer $\Gp$ to $\Gm$
despite all the considerations we have just made on the data. This is not the
case if we use BDs, which does not show a preference for either $\Gm$ or $\Gp$
because $\aijk = 1/8$ for both $\XP^{\Gm}$ and $\XP^{\Gp}$:
\begin{multline*}
  \BDs\left(\XP^{\Gm}; 1\right) =
  \BDs\left(\XP^{\Gp}; 1\right) = \\
    \left( \frac{\Gamma(\sfrac{1}{4})}{\Gamma(\sfrac{1}{4}+3)}
    \left[ \frac{\Gamma(\sfrac{1}{8} + 3)}{\Gamma(\sfrac{1}{8})} \cdot
           \cancel{\frac{\Gamma(\sfrac{1}{8})}{\Gamma(\sfrac{1}{8})}} \right] \right)^4
    = 0.0326.
\end{multline*}
The same holds for BDJ, and in general for any BD score with a constant $\aijk$.
Comparing the expressions above, it is apparent that the only difference between
them is the value of $\aijk$, which is a consequence of the different number of
configurations of $\PX^{\Gm}$ and $\PX^{\Gp}$.

The Bayes factors for BDeu and BDs are shown for $\alpha \in [10^{-4}, 10^{4}]$
in the left panel of Figure \ref{fig:bf}. The former converges to $1$ for both
$\aijk \to 0$ and $\aijk \to \infty$, but varies between $1$ and $2.5$ for
finite $\alpha$; whereas the latter is equal to $1$ for all values of $\alpha$,
never showing a preference for either $\Gm$ or $\Gp$. The Bayes factor for BDeu
does not diverge nor converge to zero, which is consistent with
\mref{eq:bdeu-ratio} from \citet{jaakkola} as $\dEP{\Gp} - \dEP{\Gm} = 0 - 0 = 0$.
However, it varies most quickly for $\alpha \in [1, 10]$, exactly the range of
the most common values used in practical applications. This provides further
evidence supporting the conclusions of \citet{jaakkola}, \citet{alphastar} and
\citet{silander}.

Finally, if we consider which DAG would be preferred according to the MDL
principle, we can see that BIC (unlike BDeu, like BDs) does not express a
preference for either DAG:
\begin{equation*}
  \BIC\left(\XP^{\Gm}\right) = \log\Prob\left(\XP^{\Gm}\right) - 0 =
                    \log\Prob\left(\XP^{\Gp}\right) - 0 = \BIC\left(\XP^{\Gp}\right)
\end{equation*}
which agrees with \citet{suzuki16}'s observation that BDeu violates the MDL
principle. \qed
\end{example}

\begin{example}
\label{ex:nonsingular}
Consider another simple example, inspired by Example \ref{ex:singular}, based on
the data set $\D_2$ and the DAGs $\Gm$, $\Gp$ shown in Figure \ref{fig:examples}.
The sample frequencies ($n_{ijk}$) for $\XP^{\Gm}$ are:
\begin{center}
\begin{tabular}{rr|cccc}
    & & \multicolumn{4}{|c}{$Z, W$} \\
    & & $0, 0$ & $1, 0$ & $0, 1$ & $1, 1$ \\
  \hline
  \multirow{2}{*}{$X$}
    & $0$ & $2$ & $1$ & $1$ & $2$ \\
    & $1$ & $1$ & $2$ & $2$ & $1$
\end{tabular}
\end{center}
and those for $\XP^{\Gp}$ are as follows.
\begin{center}
\begin{tabular}{rr|cccccccc}
    & & \multicolumn{8}{c}{$Z, W, Y$} \\
    & & $0, 0, 0$ & $1, 0, 0$ & $0, 1, 0$ & $1, 1, 0$
      & $0, 0, 1$ & $1, 0, 1$ & $0, 1, 1$ & $1, 1, 1$ \\
  \hline
  \multirow{2}{*}{$X$}
    & $0$ & $2$ & $1$ & $1$ & $0$ & $0$ & $0$ & $0$ & $2$ \\
    & $1$ & $1$ & $2$ & $2$ & $0$ & $0$ & $0$ & $0$ & $1$
\end{tabular}
\end{center}

As in Example \ref{ex:singular}, $4$ parent configurations out of $8$ are not
observed in $\Gp$ and the other $4$ have $n_{ijk}$ that are the same as those
arising from $\Gm$. The resulting conditional distributions, however, are not
singular. If we take again $\alpha = 1$, the BDeu scores for $\Gm$ and $\Gp$ are
different but this time $\Gm$ has the highest score:
\begin{align*}
  \BDeu\left(\XP^{\Gm}; 1\right) &=
    \left( \frac{\Gamma(\sfrac{1}{4})}{\Gamma(\sfrac{1}{4}+3)}
    \left[ \frac{\Gamma(\sfrac{1}{8} + 2)}{\Gamma(\sfrac{1}{8})} \cdot
           \frac{\Gamma(\sfrac{1}{8} + 1)}{\Gamma(\sfrac{1}{8})} \right] \right)^4
              = 3.906\e{-7}, \\
  \BDeu\left(\XP^{\Gp}; 1\right) &=
    \left( \frac{\Gamma(\sfrac{1}{8})}{\Gamma(\sfrac{1}{8}+3)}
    \left[ \frac{\Gamma(\sfrac{1}{16} + 2)}{\Gamma(\sfrac{1}{16})} \cdot
           \frac{\Gamma(\sfrac{1}{16} + 1)}{\Gamma(\sfrac{1}{16})} \right] \right)^4
                     = 3.721\e{-8}.
\end{align*}
On the other hand, in BDs $\aijk = 1/8$ for both DAGs, so they have the same
score:
\begin{multline*}
  \BDs\left(\XP^{\Gm}; 1\right) =
  \BDs\left(\XP^{\Gp}; 1\right) = \\
    \left( \frac{\Gamma(\sfrac{1}{4})}{\Gamma(\sfrac{1}{4}+3)}
    \left[ \frac{\Gamma(\sfrac{1}{8} + 3)}{\Gamma(\sfrac{1}{8})} \cdot
           \cancel{\frac{\Gamma(\sfrac{1}{8})}{\Gamma(\sfrac{1}{8})}} \right] \right)^4
    = 3.906\e{-7}.
\end{multline*}
BDeu once more assigns different scores to $\Gm$ and $\Gp$ despite the fact
that the observed conditional distributions in $\XP^{\Gm}$ and $\XP^{\Gp}$ are
the same, while BDs does not.

The Bayes factors for BDeu and BDs are shown in the right panel of Figure
\ref{fig:bf}. BDeu results in wildly different values depending on the choice of
$\alpha$, with Bayes factors that vary between $0.05$ and $1$ for small changes
of $\alpha \in [1, 10]$; BDs always gives a Bayes factor of $1$. Again
$\dEP{\Gp} - \dEP{\Gm} = 4 - 4 = 0$, which agrees with the fact that the Bayes
factor for BDeu does not diverge or converge to zero; and $\Gm$ and $\Gp$ have
the same BIC score, so BDeu (but not BDs) violates the MDL principle in this
example as well. \qed
\end{example}

\begin{figure}[t]
\begin{center}
\includegraphics[width=0.9\textwidth]{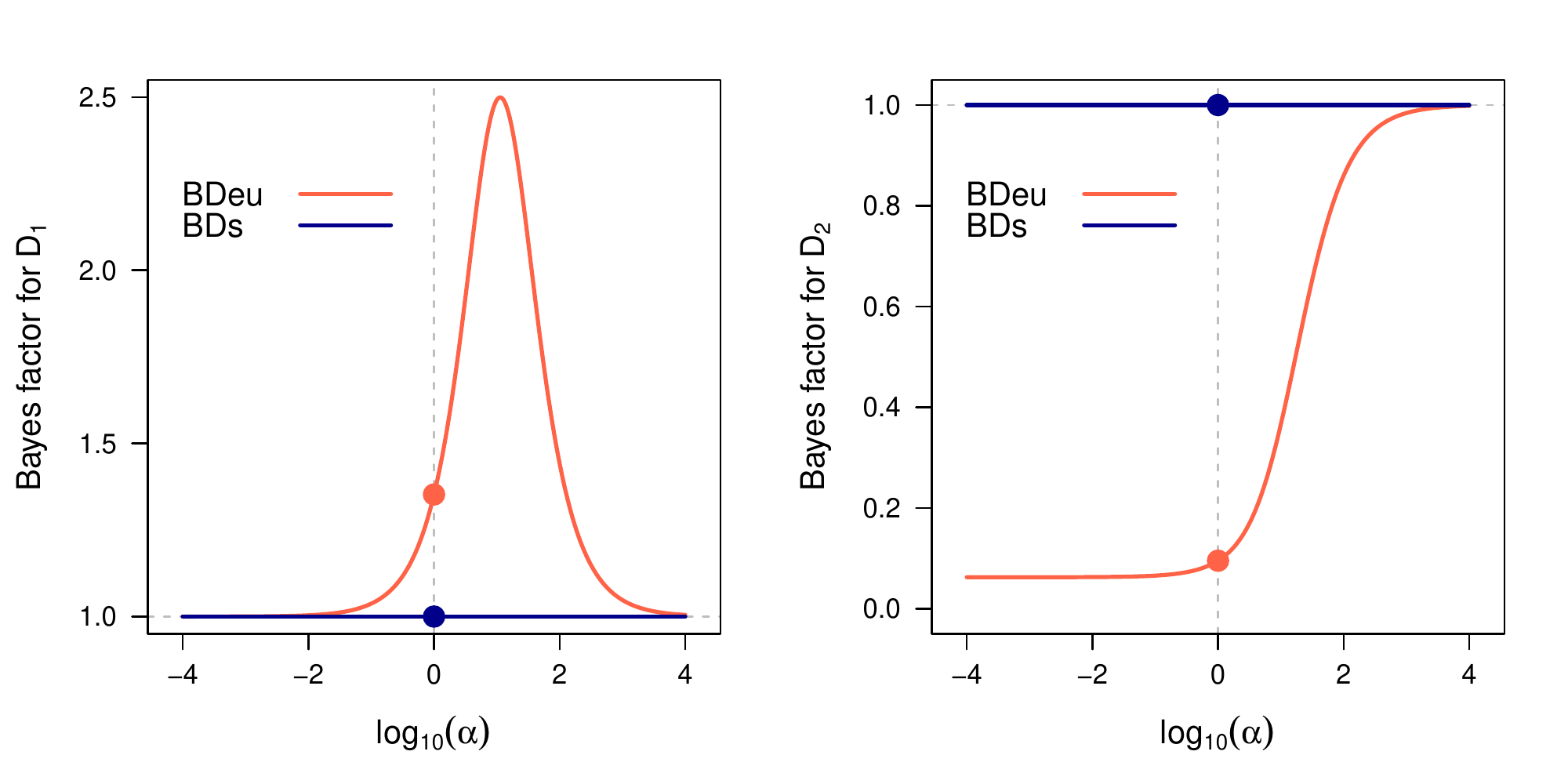}
\caption{The Bayes factors for $\Gm$ versus $\Gp$ computed using BDeu and
  BDs for Example \ref{ex:singular} (left panel) and Example \ref{ex:nonsingular}
  (right panel in orange) and dark blue, respectively. The bullet points
 correspond to the values observed for $\alpha = 1$.}
\label{fig:bf}
\end{center}
\end{figure}

\section{Bayesian Structure Learning and Entropy}
\label{sec:maxent}

Shannon's classic definition of entropy for a multinomial random variable
$X \sim \Mul(\bpi)$ with a fixed, finite set of states (alphabet) $\mathcal{A}$
is
\begin{equation*}
  \H(X; \bpi) = \E(-\log\Prob(X)) = -\sum_{a \in \mathcal{A}} \pi_a \log \pi_a
\end{equation*}
where the probabilities $\pi_a$ are typically estimated with the empirical
frequency of each $a$ in $\D$, leading to the \emph{empirical entropy
estimator}. Its properties are detailed in canonical information theory books
such as \citet{mackay} and \citet{rissanen}, and it has often been used in BN
structure learning \citep{lam,suzuki15}. However, in this paper we will focus
on Bayesian entropy estimators, for two reasons. Firstly, they are a natural
choice when studying the properties of BD scores since they are Bayesian in
nature; and having the same probabilistic assumptions (including the choice of
prior distribution) for the BD scores and for the corresponding entropy
estimators makes it easy to link their properties. Secondly, Bayesian entropy
estimators have better theoretical and empirical properties than the empirical
estimator \citep{shrinkent,hnsb}.

Starting from \mref{eq:parents}, for a BN we can write
\begin{equation*}
  \HG(\X; \Theta) = \sum_{i = 1}^N \HG(X_i; \T).
\end{equation*}
where $\HG(X_i; \T)$ is the entropy of $X_i$ given its parents $\PXiG$. The
marginal posterior expectation of $\H^{\G}(X_i; \T)$ with respect to $\T$ given
the data can then be expressed as
\begin{equation*}
  \E\left(\HG(X_i) \given \D\right) = \int \HG(X_i; \T) \Prob(\T \given \D) \dd\T
\end{equation*}
where we use $\D$ to refer specifically to the observed values for $X_i$ and
$\PXiG$ with a slight abuse of notation. We can then introduce a $\Dir(\aijk)$
prior over $\T$ with
\begin{equation*}
  \Prob(\T \given \D) = \int \Prob(\T \given \D, \aijk) \Prob(\aijk \given \D) \dd\aijk,
\end{equation*}
which leads to
\begin{align}
  \E\left(\HG(X_i) \given \D\right)
    &= \iint \HG(X_i; \T) \Prob(\T \given \D, \aijk) \Prob(\aijk \given \D) \dd\aijk \dd\T \notag \\
    &\propto \int \E\left(\HG(X_i) \given \D, \aijk\right) \Prob(\D \given \aijk) \Prob(\aijk) \dd\aijk,
\label{eq:postent}
\end{align}
where $\Prob(\aijk)$ is a hyperprior distribution over the space of the
Dirichlet priors, identified by their parameter sets $\{\aijk\}$.

The first term on the right hand-side of \mref{eq:postent} is the posterior
expectation of
\begin{align}
  &\HG(X_i | \D, \aijk) = - \sum_{j = 1}^{q_i} \sum_{k = 1}^{r_i}
                        \pijk^{(\aijk)} \log \pijk^{(\aijk)}&
  &\text{with}&
  &\pijk^{(\aijk)} = \frac{\aijk + n_{ijk}}{\aij + n_{ij}}
\label{eq:mpostent}
\end{align}
and has closed form
\begin{multline}
  \E\left(\HG(X_i) \given \D, \aijk\right) = \\
    \sum_{j = 1}^{q_i} \left[
        \psi_0(\aij + n_{ij} + 1) -
        \sum_{k = 1}^{r_i} \frac{\aijk + n_{ijk}}{\aij + n_{ij}}
                             \psi_0(\aijk + n_{ijk} + 1)
    \right]
\label{eq:empent}
\end{multline}
as shown in \citet{hnsb} and \citet{archer}, with $\psi_0(\cdot)$ denoting the
digamma function. The second term follows a \emph{Dirichlet-multinomial
distribution} \citep[also known as \emph{multivariate Polya};][]{kotz} with
density
\begin{equation}
  \Prob(\D \given \aijk)
    = \prod_{j = 1}^{q_i} \frac{n_{ij}! \, \Gamma(\aij)}{\Gamma(\aijk)^{r_i}
                                           \Gamma(n_{ij} + \aij)}
      \prod_{k = 1}^{r_i}
      \frac{\Gamma(n_{ijk} + \aijk)}{n_{ijk}!},
\label{eq:multdir}
\end{equation}
since
\begin{align*}
  \Prob(\D \given \aijk) = \int \Prob(\D \given \T) \Prob(\T \given \aijk) \dd\T
\end{align*}
where $\Prob(\D \given \T)$ follows a multinomial distribution and
$\Prob(\T \given \aijk)$ is a conjugate Dirichlet prior. Rearranging terms in
\mref{eq:multdir} we find that
\begin{multline}
  \Prob(\D \given \aijk)
    = \prod_{j = 1}^{q_i} \frac{n_{ij}!}{\prod_{k = 1}^{r_i} n_{iijk}!} \cdot \\
      \prod_{j = 1}^{q_i} \frac{\Gamma(\aij)}{\Gamma(n_{ij} + \aij)}
      \prod_{k = 1}^{r_i} \frac{\Gamma(n_{ijk} + \aijk)}{\Gamma(\aijk)}
    \propto \BD\left(\XPi^\G; \alpha\right)
\label{eq:probbd}
\end{multline}
making the link between BD scores and entropy explicit. Unlike \mref{eq:probbd},
BD has a prequential formulation \citep{dawid} which focuses on the sequential
prediction of future events \citep{engineering}. For this reason it considers
observations as coming in a specific temporal order and it does not include a
multinomial coefficient, which we will drop in the remainder of the paper.
Therefore,
\begin{equation}
  \E\left(\HG(X_i) \given \D\right) =
     \int \E\left(\HG(X_i) \given \D, \aijk\right) \BD\left(\XPi^\G; \alpha\right) \Prob(\aijk) \dd\aijk,
\label{eq:entbd}
\end{equation}
and is determined by three components: the posterior expected entropy of
$\XPiG$ under a $\Dir(\aijk)$ prior, the BD score term for $\XPiG$, and the
hyperprior over the space of the Dirichlet priors.

This definition of the expected entropy associated with the structure $\G$ of a
BN is very general and encompasses the entropies associated with all the BD
scores as special cases. In particular, the entropy associated with K2, BDJ,
BDeu and BDs can be obtained by giving $\Prob(\aijk) = 1$ to the single
set of $\aijk$ associated with the corresponding prior, leading to
\begin{equation*}
  \E\left(\HG(X_i) \given \D\right) =
    \E\left(\HG(X_i) \given \D, \aijk\right) \BD\left(\XPi^\G; \alpha\right);
\label{eq:simple}
\end{equation*}
and similarly for BDla
\begin{equation*}
  \E\left(\HG(X_i) \given \D\right) = \frac{1}{|S_L|}
    \sum_{s \in S_L} \E\left(\HG(X_i) \given \D, \aijk^s\right) \BD\left(\XPi^\G; s\right).
\end{equation*}

\section{The Posterior Marginal Entropy}
\label{sec:expent}

The posterior expectation of the entropy for a given $\Dir(\aijk)$ prior in
\mref{eq:empent}, despite having a form that looks very different from the
marginal posterior entropy in \mref{eq:mpostent}, can be written in terms of
the latter as we show in the following lemma.

\begin{lemma}
  \begin{equation*}
    \E\left(\HG(X_i) \given \D; \aijk\right) \approx
      \HG(X_i \given \D, \aijk) - \sum_{j = 1}^{q_i} \frac{r_i - 1}{2(\aij + n_{ij})}.
  \end{equation*}
  \label{aic}
\end{lemma}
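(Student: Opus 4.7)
The plan is to start from the closed-form expression in \mref{eq:empent} and convert each digamma term into a logarithm using the standard asymptotic expansion $\psi_0(y+1) = \log y + \frac{1}{2y} + O(1/y^2)$. Applied with $y = \aij + n_{ij}$ and $y = \aijk + n_{ijk}$, this rewrites \mref{eq:empent} (up to the stated approximation) as
\begin{equation*}
  \sum_{j=1}^{q_i}\left[\log(\aij+n_{ij}) + \frac{1}{2(\aij+n_{ij})}
    - \sum_{k=1}^{r_i} \pijk^{(\aijk)}\left(\log(\aijk+n_{ijk}) + \frac{1}{2(\aijk+n_{ijk})}\right)\right].
\end{equation*}

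Next I would separate the logarithmic contribution from the $1/(2y)$ correction. Using $\sum_k \pijk^{(\aijk)} = 1$, the log terms combine into
\begin{equation*}
  -\sum_{j=1}^{q_i}\sum_{k=1}^{r_i} \pijk^{(\aijk)} \log\frac{\aijk+n_{ijk}}{\aij+n_{ij}}
  = -\sum_{j=1}^{q_i}\sum_{k=1}^{r_i} \pijk^{(\aijk)} \log \pijk^{(\aijk)}
  = \HG(X_i \given \D, \aijk),
\end{equation*}
which recovers the marginal posterior entropy defined in \mref{eq:mpostent}.

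For the correction piece, the key cancellation is that $\pijk^{(\aijk)}/(\aijk+n_{ijk}) = 1/(\aij + n_{ij})$ by the definition of $\pijk^{(\aijk)}$. Therefore, for each $j$,
\begin{equation*}
  \frac{1}{2(\aij+n_{ij})} - \sum_{k=1}^{r_i} \frac{\pijk^{(\aijk)}}{2(\aijk+n_{ijk})}
  = \frac{1}{2(\aij+n_{ij})} - \frac{r_i}{2(\aij+n_{ij})}
  = -\frac{r_i-1}{2(\aij+n_{ij})},
\end{equation*}
and summing over $j$ yields the stated bias term. Adding the two contributions completes the derivation.

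The calculation is essentially routine once the digamma expansion is in hand; the only subtlety is the approximation error. Each digamma term contributes an $O(1/y^2)$ residual, so the overall approximation error is $\sum_j O(1/(\aij+n_{ij})^2)$, of the same order as the next Bernoulli correction. This is controlled provided each $\aij + n_{ij}$ is not too small, so I would note that the $\approx$ in the lemma is to be read in this asymptotic-expansion sense and does not require any additional argument beyond Stirling/digamma asymptotics.
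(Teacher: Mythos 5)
Your proposal is correct and follows essentially the same route as the paper: expand each digamma via $\psi_0(z+1)\approx\log z + 1/(2z)$, recover the marginal posterior entropy from the log terms using $\sum_k \pijk^{(\aijk)}=1$, and obtain the bias term from the cancellation $\pijk^{(\aijk)}/(\aijk+n_{ijk})=1/(\aij+n_{ij})$. Your added remark on the order of the residual goes slightly beyond the paper, which simply drops the $o(z^{-2})$ remainder without further comment.
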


\begin{proof}{\em of Lemma \ref{aic}.}
  Combining $\psi_0(z + 1) = \psi_0(z) + 1/z$ with
  $\psi_0(z) = \log(z) - 1/(2z) + o(z^{-2})$ from \citet{anderson}, we can write
  $\psi_0(z + 1) = \log(z) + 1/(2z) + o(z^{-2})$. Dropping the remainder term
  $o(z^{-2})$ we approximate $\psi_0(z + 1) \approx \log(z) + 1/(2z)$, which leads to
  \begin{align*}
    \E&\left(\HG(X_i) \given \D, \aijk\right) = \\
      &= \sum_{j = 1}^{q_i} \left[
          \psi_0(\aij + n_{ij} + 1) -
          \sum_{k = 1}^{r_i} \frac{\aijk + n_{ijk}}{\aij + n_{ij}}
                               \psi_0(\aijk + n_{ijk} + 1)
       \right] \\
      &\approx \sum_{j = 1}^{q_i} \left[
          \log(\aij + n_{ij}) + \frac{1}{2(\aij + n_{ij})} - \right. \\
      &\left. \qquad \qquad
          \sum_{k = 1}^{r_i} \frac{\aijk + n_{ijk}}{\aij + n_{ij}}
                               \left( \log(\aijk + n_{ijk}) + \frac{1}{2(\aijk + n_{ijk})} \right)
      \right] \\
      &= -\sum_{j = 1}^{q_i}
          \sum_{k = 1}^{r_i} \frac{\aijk + n_{ijk}}{\aij + n_{ij}}
                               \log\left(\frac{\aijk + n_{ijk}}{\aij + n_{ij}}\right)
       - \sum_{j = 1}^{q_i} \frac{r_i - 1}{2(\aij + n_{ij})} \\
      &= \HG(X_i \given \D, \aijk) - \sum_{j = 1}^{q_i} \frac{r_i - 1}{2(\aij + n_{ij})}.
  \end{align*} \qed
\end{proof}

Therefore, $\E(\HG(X_i) \given \D; \aijk)$ is well approximated by the marginal
posterior entropy $\HG(X_i | \D, \aijk)$ from \mref{eq:mpostent} plus a bias
term that depends on the augmented counts $\aij + n_{ij}$ for the $q_i$
configurations of $\PXiG$. A similar result was derived in \citet{miller} for
the empirical entropy estimator and is the basis of the Miller-Madow entropy
estimator.

\section{BDeu and the Principle of Maximum Entropy}
\label{sec:bd-maxent}

The \emph{maximum relative entropy} principle \citep[ME;][]{shore,skilling,caticha2}
states that we should choose a model that is consistent with our knowledge of
the phenomenon we are modelling and that introduces no unwarranted information.
In the context of probabilistic learning this means choosing the model that has
the largest possible entropy for the data, which will encode the probability
distribution that best reflects our current knowledge of $\X$ given by $\D$. In
the Bayesian setting in which BD scores are defined, we then prefer a DAG $\Gp$
over a second DAG $\Gm$ if
\begin{equation}
  \E\left(\H^{\Gm}(\X) \given \D\right) \leqslant \E\left(\H^{\Gp}(\X) \given \D\right)
\label{eq:maxent}
\end{equation}
because these estimates of entropy incorporate all our knowledge including that
encoded in the prior and in the hyperprior. The resulting formulation of ME
represents a very general approach that includes Bayesian posterior estimation
as a particular case \citep{caticha}; which is intuitively apparent since the
expected posterior entropy in \mref{eq:entbd} is proportional to BD. Furthermore,
ME can also be seen as a particular case of the MDL principle \citep{feder}.

\cite{suzuki16} defined \emph{regular} those BD scores that prefer simpler BNs
that have smaller empirical entropies and few arcs:
\begin{multline*}
  \H\left(\XPiGm; \piijk\right) \leqslant \H\left(\XPiGp; \piijk\right),
    \PXiGm \subset \PXiGp \Rightarrow \\
  \BD\left(\XPiGm; \alpha_i\right) \geqslant \BD\left(\XPiGp; \alpha_i\right).
\end{multline*}
For large sample sizes, the probabilities $\pijk^{(\aijk)}$ from
\mref{eq:mpostent} used in the posterior entropy estimators converge to the
empirical frequencies used in the empirical entropy estimator, making the above
asymptotically equivalent to
\begin{multline*}
  \H\left(\XPiGm; \aijk\right) \leqslant \H\left(\XPiGp; \aijk\right),
    \PXiGm \subset \PXiGp \Rightarrow \\
  \BD\left(\XPiGm; \alpha_i\right) \geqslant \BD\left(\XPiGp; \alpha_i\right)
\end{multline*}
and connecting DAGs with the highest BD scores with those that minimise the
marginal posterior entropy from \mref{eq:mpostent}. However, we prefer to study
BDeu and its prior using ME as defined in \mref{eq:maxent} for two reasons.
Firstly, posterior expectations are widely considered to be superior to MAP
estimates in the literature \citep{berger}, as has been specifically shown for
entropy in \citet{hnsb}. Secondly, ME directly incorporates the information
encoded in the prior and in the hyperprior, without relying on large samples to
link the empirical entropy (which depends on $\X, \Theta$) with the BD scores
(which depend on $\X, \api$ and integrate $\Theta$ out).

Without loss of generality, we consider again the simple case in which $\Gm$ and
$\Gp$ differ by a single arc, so that only one local distribution differs
between the two DAGs. For BDeu, $\aijk = \alpha / (r_i q_i)$ and substituting
\mref{eq:simple} in \mref{eq:maxent} we get
\begin{multline}
  \E\left(\H^{\Gm}(X_i) \given \D, \aijk\right) \BDeu\left(\XPiGm; \alpha\right) \leqslant \\
  \E\left(\H^{\Gp}(X_i) \given \D, \aijk\right) \BDeu\left(\XPiGp; \alpha\right).
\label{eq:bdeucomp}
\end{multline}
If the sample $\D$ is sparse, some configurations of the variables will not be
observed and the effective imaginary sample size may be smaller for $\Gp$ than
for $\Gm$ like in Examples \ref{ex:singular} and \ref{ex:nonsingular}. As a
result, when we compare a $\Gm$ for which we observe all configurations of
$\PXiGm$ with a $\Gp$ for which we do not observe some configurations of
$\PXiGp$, instead of \mref{eq:bdeucomp} we are actually using
\begin{multline}
  \E\left(\H^{\Gm}(X_i) \given \D, \aijk\right) \BDeu\left(\XPiGm; \alpha\right) \leqslant \\
  \E\left(\H^{\Gp}(X_i) \given \D, \aijk\right) \BDeu\left(\XPiGp; \alpha (\tq / q_i)\right)
\label{eq:bdeucomp2}
\end{multline}
which is different from \mref{eq:maxent} and thus not consistent with ME. It is
not correct from a Bayesian perspective either, because $\Gm$ and $\Gp$ are
compared using marginal likelihoods arising from different priors; as expected
since we know from \citet{caticha} that Bayesian posterior inference is a
particular case of ME. From the perspective of ME, those priors carry different
information on the $\T$. They incorrectly express different levels of belief in
the uniform prior underlying BDeu as a consequence of the difference in their
effective imaginary sample sizes, even though they were meant to express the
same level of belief for all possible DAGs. This may bias the entropy (which is,
after all, the expected value of the information on $\X$ and on the $\T$) of
$\Gp$ compared to that of $\Gm$ in \mref{eq:bdeucomp2} and lead to choosing DAGs
which would not be chosen by ME. We would like to stress that this scenario is
not uncommon; on the contrary, such a model comparison is almost guaranteed to
take place when the data are sparse. As structure learning explores more and
more DAGs to identify an optimal one, it will inevitably consider DAGs with
unobserved parent configurations, either because they are too dense or because
those parent sets are not well supported by the few observed data points.

In particular, if some $n_{ij} = 0$ then the posterior expected entropy of
$\XPi^{\Gp}$ becomes
\begin{multline*}
\E\left(\H^{\Gp}(X_i) \given \D, \aijk\right) = \\
  \sum_{j : n_{ij} = 0} \left[
          \psi_0(r_ i \aijk + 1) -
          \sum_{k = 1}^{r_i} \frac{\cancel{\aijk}}{r_i \cancel{\aijk}}
                               \psi_0(\aijk + 1)
       \right] + \\
  \sum_{\jpo} \left[
          \psi_0(r_i \aijk + n_{ij} + 1) -
          \sum_{k = 1}^{r_i} \frac{\aijk + n_{ijk}}{r_i \aijk + n_{ij}}
                               \psi_0(\aijk + n_{ijk} + 1)
       \right]
\end{multline*}
where the first term collects the conditional entropies corresponding to the
$q_i - \tq$ unobserved parent configurations, for which the posterior
distribution coincides with the prior:
\begin{multline}
  \sum_{j : n_{ij} = 0} \left[
          \psi_0(r_ i \aijk + 1) -
          \sum_{k = 1}^{r_i} \frac{1}{r_i}
                               \psi_0(\aijk + 1)
       \right] \approx \\
-\sum_{j : n_{ij} = 0}
          \sum_{k = 1}^{r_i} \frac{\cancel{\aijk}}{r_i \cancel{\aijk}}
                               \log\left(\frac{\cancel{\aijk}}{r_i \cancel{\aijk}}\right)
       - \sum_{j = 1}^{q_i} \frac{r_i - 1}{2\aij} =
(q_i - \tq) \left[ -\log\frac{1}{r_i} - \frac{r_i - 1}{2\aij} \right].
\label{eq:vanishing}
\end{multline}

By definition, the uniform distribution has the maximum possible entropy; the
posteriors we would estimate if we could observe samples for those
configurations of the $\PXiGp$ would almost certainly have a smaller entropy.
At the same time, the entropies in the second term are smaller than what
they would be if we only considered the $\tq$ observed parent configurations,
because $\aijk = \alpha / (r_i q_i) < \alpha / (r_i \tq)$ means that posterior
densities deviate more from the uniform distribution. These two effects,
however, do not necessarily balance each other out; as we can see by revisiting
Examples \ref{ex:singular} and \ref{ex:nonsingular} below it is possible to
incorrectly choose $\Gp$ over $\Gm$.

\setcounter{example}{0}
\begin{example}{\em (Continued)}
The empirical posterior entropies for $\Gm$ and $\Gp$ are
\begin{equation*}
\H(\XP^{\Gm}) = \H(\XP^{\Gp}) = 4 \left[
  - 0 \log 0 -
	1 \log 1\right] = 0
\end{equation*}
by convention, but the posterior entropies differ:
\begin{align*}
  \H(\XP^{\Gm}; \alpha) &= 4 \left[
    - \frac{0 + \sfrac{1}{8}}{3 + \sfrac{1}{4}} \log \frac{0 + \sfrac{1}{8}}{3 + \sfrac{1}{4}} -
      \frac{3 + \sfrac{1}{8}}{3 + \sfrac{1}{4}} \log \frac{3 + \sfrac{1}{8}}{3 + \sfrac{1}{4}} \right] \\
    &= 0.652,\\
  \H(\XP^{\Gp}; \alpha) &= 4 \left[
    - \frac{0 + \sfrac{1}{16}}{3 + \sfrac{1}{8}} \log \frac{0 + \sfrac{1}{16}}{3 + \sfrac{1}{8}} -
      \frac{3 + \sfrac{1}{16}}{3 + \sfrac{1}{8}} \log \frac{3 + \sfrac{1}{16}}{3 + \sfrac{1}{8}} \right] \\
    &= 0.392.
\end{align*}
The expected posterior entropies for $\Gm$ and $\Gp$ are
\begin{align*}
  \E&\left(\H^{\Gm}(X) \given \D, \frac{1}{8}\right) = \\
    &= 4 \left[
      \psi_0(\sfrac{1}{4} + 3 + 1)
        - \frac{0 + \sfrac{1}{8}}{3 + \sfrac{1}{4}} \psi_0(\sfrac{1}{8} + 0 + 1)
        - \frac{3 + \sfrac{1}{8}}{3 + \sfrac{1}{4}} \psi_0(\sfrac{1}{8} + 3 + 1)
      \right] \\
    &= 0.3931, \\
  \E&\left(\H^{\Gp}(X) \given \D, \frac{1}{16}\right) = \\
    &= 4 \left[
      \psi_0(\sfrac{1}{8} + 3 + 1) - \frac{0 + \sfrac{1}{16}}{3 + \sfrac{1}{8}} \psi_0(\sfrac{1}{16} + 0 + 1)
        - \frac{3 + \sfrac{1}{16}}{3 + \sfrac{1}{8}} \psi_0(\sfrac{1}{16} + 3 + 1)
      \right] + \\
    &\qquad 4 \left[
      \psi_0(\sfrac{1}{8} + 0 + 1) - \frac{0 + \sfrac{1}{16}}{0 + \sfrac{1}{8}} \psi_0(\sfrac{1}{16} + 0 + 1)
        - \frac{3 + \sfrac{1}{16}}{0 + \sfrac{1}{8}} \psi_0(\sfrac{1}{16} + 0 + 1)
    \right] \\
    &= 0.5707.
\end{align*}
Therefore, substituting these values in \mref{eq:bdeucomp2},
\begin{multline*}
  \E\left(\H^{\Gm}(X) \given \D\right) = 0.3931 \cdot 0.0326 = 0.0128 < \\
    0.0252 = 0.5707 \cdot 0.0441 = \E\left(\H^{\Gp}(X) \given \D\right);
\end{multline*}
and we would choose $\Gp$ over $\Gm$ even though we only observe $\tq = 4$
configurations of $\PX^{\Gp}$ out of $8$, and the sample frequencies are
identical for those configurations. The data contribute the same information
to the posterior expected entropies; both $\XPGm$ and $\XPGp$ have empirical
entropy equal to zero. The difference must then arise because of the priors:
both $\Theta_X \given \PX^{\Gm}$ and $\Theta_X \given \PX^{\Gp}$ follow a
uniform Dirichlet prior, but in the former $\alpha = 1$ and in the latter
$\alpha = \fh$ because $\tq = 4 < 8 = q_i$. A consistent model comparison
requires that all models are evaluated with the same prior, which clearly is not
the case in this example. \qed
\end{example}

\begin{example}{\em (Continued)}
The conditional distributions of  $\XP^{\Gm}$ and $\XP^{\Gp}$ both have the same
(positive) empirical entropy:
\begin{equation*}
  \H(\XP^{\Gm}) = \H(\XP^{\Gp}) = 4 \left[
    - \frac{1}{3} \log \frac{1}{3} -
      \frac{2}{3} \log \frac{2}{3}\right] = 2.546.
\end{equation*}
However, their posterior entropies are different:
\begin{align*}
  \H(\XP^{\Gm}; \alpha) &= 4 \left[
    - \frac{1 + \sfrac{1}{8}}{3 + \sfrac{1}{4}} \log \frac{1 + \sfrac{1}{8}}{3 + \sfrac{1}{4}} -
      \frac{2 + \sfrac{1}{8}}{3 + \sfrac{1}{4}} \log \frac{2 + \sfrac{1}{8}}{3 + \sfrac{1}{4}} \right] \\
    &= 2.580, \\
  \H(\XP^{\Gp}; \alpha) &= 4 \left[
    - \frac{1 + \sfrac{1}{16}}{3 + \sfrac{1}{8}} \log \frac{1 + \sfrac{1}{16}}{3 + \sfrac{1}{8}} -
      \frac{2 + \sfrac{1}{16}}{3 + \sfrac{1}{8}} \log \frac{2 + \sfrac{1}{16}}{3 + \sfrac{1}{8}} \right] \\
    &= 2.564;
\end{align*}
and the respective posterior expected entropies are:
\begin{align*}
  \E&\left(\H^{\Gm}(X) \given \D, \frac{1}{8}\right) = \\
    &= 4 \left[
      \psi_0(\sfrac{1}{4} + 3 + 1)
        - \frac{1 + \sfrac{1}{8}}{3 + \sfrac{1}{4}} \psi_0(\sfrac{1}{8} + 1 + 1)
        - \frac{2 + \sfrac{1}{8}}{3 + \sfrac{1}{4}} \psi_0(\sfrac{1}{8} + 2 + 1)
      \right] \\
    &= 2.066, \\
  \E&\left(\H^{\Gp}(X) \given \D, \frac{1}{16}\right) = \\
    &= 4 \left[
      \psi_0(\sfrac{1}{8} + 3 + 1) - \frac{0 + \sfrac{1}{16}}{3 + \sfrac{1}{8}} \psi_0(\sfrac{1}{16} + 1 + 1)
        - \frac{3 + \sfrac{1}{16}}{3 + \sfrac{1}{8}} \psi_0(\sfrac{1}{16} + 2 + 1)
      \right] + \\
    &\qquad 4 \left[
      \psi_0(\sfrac{1}{8} + 0 + 1) - \frac{0 + \sfrac{1}{16}}{0 + \sfrac{1}{8}} \psi_0(\sfrac{1}{16} + 0 + 1)
        - \frac{3 + \sfrac{1}{16}}{0 + \sfrac{1}{8}} \psi_0(\sfrac{1}{16} + 0 + 1)
    \right] \\
    &= 4.069.
\end{align*}
Therefore, substituting these values in \mref{eq:bdeucomp2} leads to
\begin{multline*}
  \E\left(\H^{\Gm}(X) \given \D\right) = 2.066 \cdot 3.906\e{-7} = 8.071\e{-7} > \\
    1.514\e{-7} = 4.069 \cdot 3.721\e{-8} = \E\left(\H^{\Gp}(X) \given \D\right).
\end{multline*}
Even though we choose $\Gm$ over $\Gp$, we still express a preference for one of
the DAGs even though the information in the data is the same; which confirms
that the difference is attributable to the prior. \qed
\end{example}

\begin{figure}[t]
\begin{center}
\includegraphics[width=0.9\textwidth]{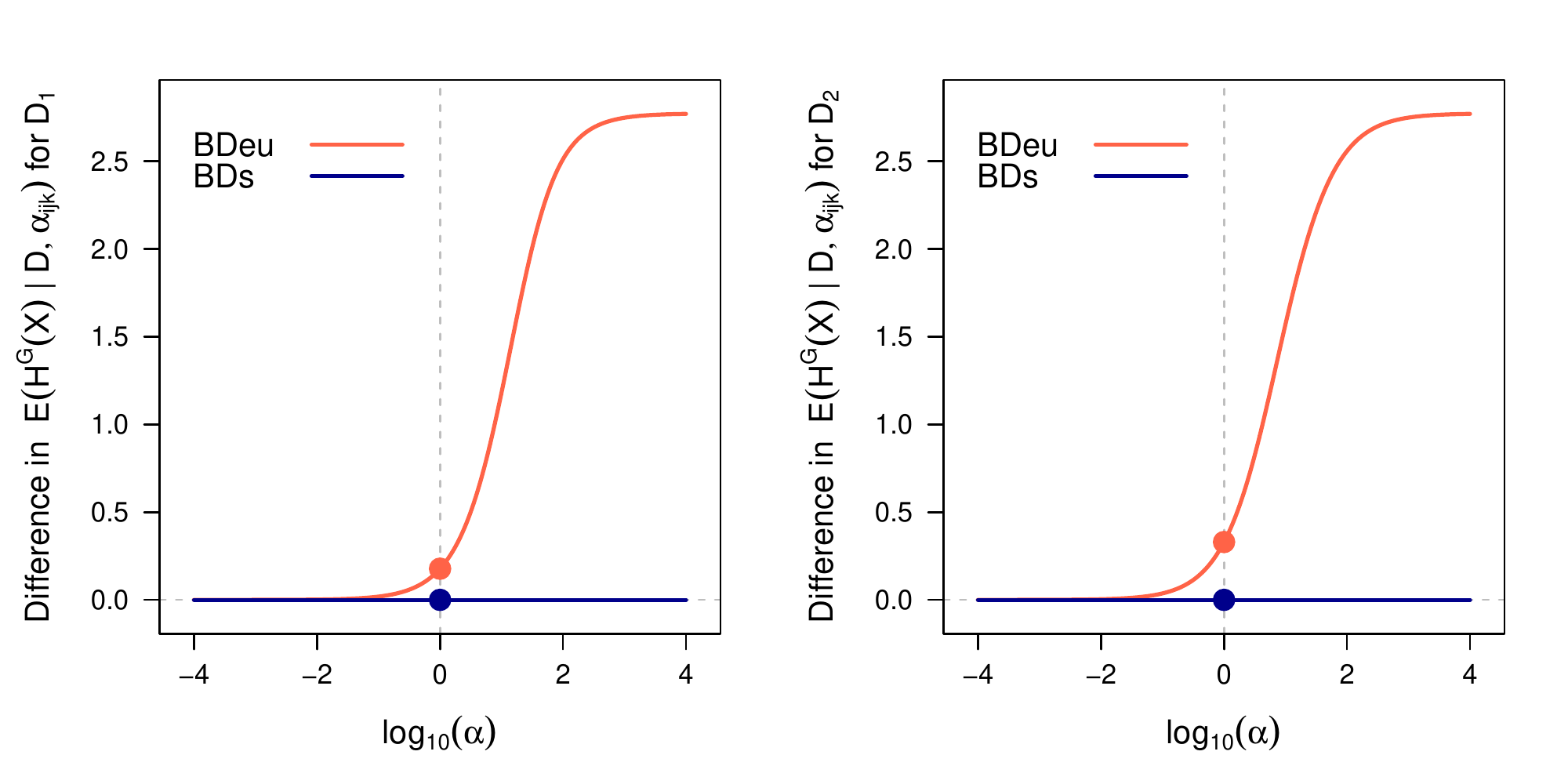}
\caption{The difference between $\E\left(\H^{\Gm}(X) \given \D, \aijk \right)$
  and $\E\left(\H^{\Gp}(X) \given \D, \aijk \right)$ computed using BDeu and
  BDs for Example \ref{ex:singular} (left panel) and Example \ref{ex:nonsingular}
  (right panel) in orange and dark blue, respectively. The bullet points
 correspond to the values observed for $\alpha = 1$.}
\label{fig:ement}
\end{center}
\end{figure}

\begin{figure}[t]
\begin{center}
\includegraphics[width=0.9\textwidth]{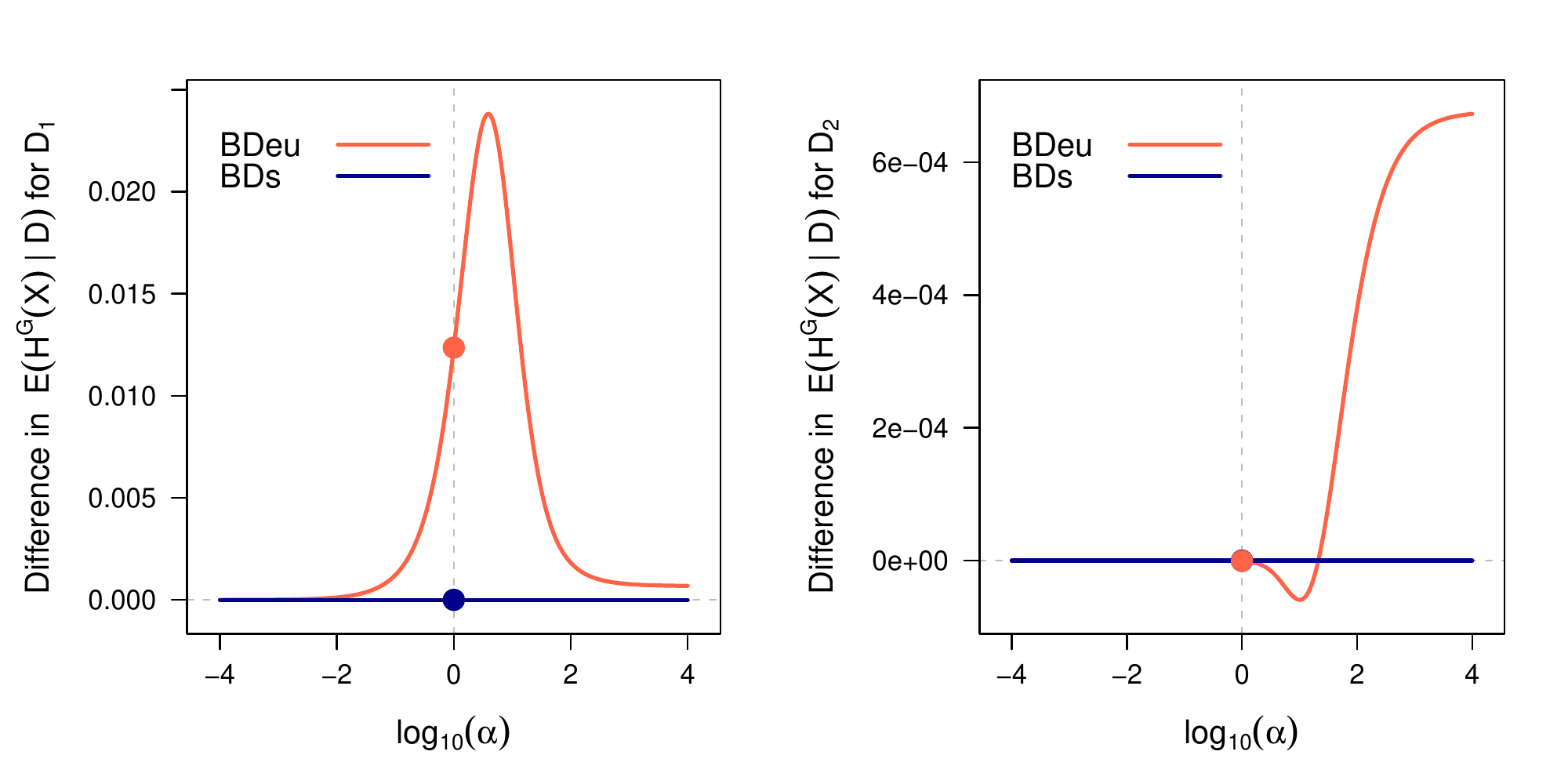}
\caption{The difference between $\E\left(\H^{\Gm}(X) \given \D\right)$
  and $\E\left(\H^{\Gp}(X) \given \D\right)$ computed using BDeu and
  BDs for Example \ref{ex:singular} (left panel) and Example \ref{ex:nonsingular}
  (right panel) in orange and dark blue, respectively. The bullet points
 correspond to the values observed for $\alpha = 1$.}
\label{fig:ement2}
\end{center}
\end{figure}

Based on these results and the examples above, we state the following theorem.

\begin{theorem}
  Using BDeu and the associated uniform prior over the parameters of a BN for
  structure learning violates the maximum relative entropy principle if any
  candidate parent configuration of any node is not observed in the data.
\end{theorem}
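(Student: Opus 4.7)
The plan is to expose, via the decomposition already assembled in Section \ref{sec:bd-maxent}, that the quantity actually being compared when we apply the ME criterion \mref{eq:maxent} to BDeu is \emph{not} \mref{eq:bdeucomp} but rather \mref{eq:bdeucomp2}, and that these two comparisons use priors carrying different information. The theorem then reduces to pointing out that the ME principle requires a single fixed prior across competing models, and showing that BDeu cannot satisfy this requirement as soon as one parent configuration is unobserved.

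First, I would fix notation by taking $\Gm$ and $\Gp$ differing by a single parent of $X_i$, with $\Gm$ having all $q_i^{\Gm}$ configurations of $\PXiGm$ observed and $\Gp$ having $\tq < q_i^{\Gp}$ configurations of $\PXiGp$ observed (this is without loss of generality, since any pair of DAGs can be connected by a chain of such single-arc modifications, and the Markov decomposition in \mref{eq:parents} makes the comparison local to the affected node). I would then substitute the closed-form expected entropy \mref{eq:empent} and the explicit BDeu expression \mref{eq:bd} into the right-hand side of \mref{eq:simple}, for both $\Gm$ and $\Gp$, with the BDeu choice $\aijk = \alpha/(r_i q_i)$. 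The key algebraic observation, already recorded between \mref{eq:bd} and \mref{eq:bds}, is that each factor of BDeu corresponding to an unobserved parent configuration simplifies to $1$, so that only $\tq$ terms survive. The surviving terms have hyperparameters $\alpha/(r_i q_i^{\Gp})$, i.e.\ the effective imaginary sample size contributed by the prior to $\BDeu(\XPiGp; \alpha)$ is $\alpha(\tq/q_i^{\Gp}) < \alpha$. For $\Gm$ the full mass $\alpha$ is retained.

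Next I would invoke the ME formulation in \mref{eq:entbd} and its BDeu specialisation to conclude that the inequality actually evaluated by BDeu is \mref{eq:bdeucomp2}, not \mref{eq:bdeucomp}. At this point the theorem becomes a statement about the admissibility of the comparison: ME, as described in the paragraph preceding \mref{eq:bdeucomp}, requires that both sides of \mref{eq:maxent} be computed from the same prior, because the expected entropy is meant to summarise the information in the data \emph{plus} the information encoded in the prior and hyperprior. Since the left-hand side of \mref{eq:bdeucomp2} uses a $\Dir(\alpha/(r_i q_i^{\Gm}))$ prior while the right-hand side effectively uses a $\Dir(\alpha/(r_i q_i^{\Gp}))$ prior (restricted to the observed configurations), the two sides do not agree on the prior information, hence the comparison is not a valid instance of \mref{eq:maxent}. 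This is exactly the failure mode illustrated quantitatively in the continuations of Examples \ref{ex:singular} and \ref{ex:nonsingular}, which can be cited as witnesses that the violation is not only formal but also produces concrete disagreements between BDeu and ME (and, via \mref{eq:vanishing}, produces a residual entropy contribution $(q_i^{\Gp}-\tq)[\log r_i - (r_i-1)/(2\aij)]$ from parent configurations for which the data provides no update).

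The main obstacle, I expect, is being precise about what ``violates the ME principle'' means. A reader could object that \mref{eq:bdeucomp2} \emph{is} a legitimate ME comparison, merely with a different prior for $\Gp$; so I would need to emphasise, as the paper does just after \mref{eq:bdeucomp2}, that the \emph{intended} prior for BDeu is a uniform $\Dir$ with imaginary sample size $\alpha$ for every candidate DAG, and that the structural identity of the prior is part of the ``current state of knowledge'' whose preservation ME demands. The effective reduction of the imaginary sample size from $\alpha$ to $\alpha(\tq/q_i)$ is therefore not a deliberate modelling choice but an artefact of cancellation in \mref{eq:bd}; once this is made explicit the violation follows immediately, and the existence of an unobserved parent configuration is precisely the condition under which $\tq < q_i$ and the cancellation occurs.
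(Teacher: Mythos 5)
Your proposal is correct and follows essentially the same route as the paper: you identify the cancellation of BDeu factors for unobserved parent configurations, the resulting drop in effective imaginary sample size from $\alpha$ to $\alpha(\tq/q_i)$, and hence that the comparison actually performed is \mref{eq:bdeucomp2} rather than \mref{eq:bdeucomp}, so the two DAGs are scored under priors carrying different information, which is exactly how the paper argues the ME violation (supplemented by \mref{eq:vanishing} and the worked examples). No substantive gap.
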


This is not the case for BDs, because its piecewise uniform prior preserves the
imaginary sample size even when $\tq < q_i$; and because it prevents the
posterior entropy from inflating by allowing the $\tq$ terms corresponding to
the $n_{ij} = 0$ to simplify. Assuming $\aijk = 0$ in \mref{eq:vanishing}
implies
\begin{equation*}
  \sum_{j : n_{ij} = 0} \left[
          \psi_0(1) -
          \sum_{k = 1}^{r_i} \frac{1}{r_i} \psi_0(1)
       \right] = \psi_0(1) - \psi_0(1) = 0.
\end{equation*}

\setcounter{example}{0}
\begin{example}{\em (Continued)}
If we compare $\XPGm$ and $\XPGp$ under the prior assumed by BDs, we have that
$\aijk = \sfrac{1}{8}$ for both $\XPGm$ and the $\tq$ observed parent
configurations in $\XPGp$. Then their posterior expected entropies are
\begin{align*}
  \E&\left(\H^{\Gm}(X) \given \D, \frac{1}{8}\right) =
  \E\left(\H^{\Gp}(X) \given \D, \frac{1}{8}\right) = \\
    &= 4 \left[
      \psi_0(\sfrac{1}{4} + 3 + 1)
        - \frac{0 + \sfrac{1}{8}}{3 + \sfrac{1}{4}} \psi_0(\sfrac{1}{8} + 0 + 1)
        - \frac{3 + \sfrac{1}{8}}{3 + \sfrac{1}{4}} \psi_0(\sfrac{1}{8} + 3 + 1)
      \right] \\
    &= 0.3931
\end{align*}
and substituting these values in \mref{eq:bdeucomp}
\begin{multline*}
  \E\left(\H^{\Gm}(X) \given \D\right) = 0.3931 \cdot 0.0326 = 0.0128 = \\
    0.0128 = 0.3931 \cdot 0.0326 = \E\left(\H^{\Gp}(X) \given \D\right).
\end{multline*}
ME does not express a preference for either $\Gm$ or $\Gp$; since we have
observed above that the data contribute exactly the same information for both
DAGs, the same must be true for the prior associated with BDs.

A side effect of not violating ME is that the choice between $\Gm$ and $\Gp$ is
no longer sensitive to the value of $\alpha$; we can see from the left panels
of Figures \ref{fig:ement} and \ref{fig:ement2} that both the difference
between $\E\left(\H^{\Gm}(X) \given \D, \frac{1}{8}\right)$ and
$\E\left(\H^{\Gp}(X) \given \D, \frac{1}{8}\right)$ and the difference between
$\E\left(\H^{\Gm}(X) \given \D\right)$ and $\E\left(\H^{\Gp}(X) \given \D\right)$
are equal to zero for all $\alpha$. \qed
\end{example}

\begin{example}{\em (Continued)}
Again $\aijk = \sfrac{1}{8}$ for both $\XPGm$ and the $\tq$ observed parent
configurations in $\XPGp$, so
\begin{align*}
  \E&\left(\H^{\Gm}(X) \given \D, \frac{1}{8}\right) =
  \E\left(\H^{\Gp}(X) \given \D, \frac{1}{8}\right) = \\
    &= 4 \left[
      \psi_0(\sfrac{1}{4} + 3 + 1)
        - \frac{1 + \sfrac{1}{8}}{3 + \sfrac{1}{4}} \psi_0(\sfrac{1}{8} + 1 + 1)
        - \frac{2 + \sfrac{1}{8}}{3 + \sfrac{1}{4}} \psi_0(\sfrac{1}{8} + 2 + 1)
      \right] \\
    &= 2.066
\end{align*}
which leads to
\begin{multline*}
   \E\left(\H^{\Gm}(X) \given \D\right) = 2.066 \cdot 3.906\e{-7} = 8.071\e{-7} = \\
      8.071\e{-7} = 2.066 \cdot 3.906\e{-7} = \E\left(\H^{\Gp}(X) \given \D\right).
\end{multline*}
Again we can see from the right panels of Figures \ref{fig:ement} and
\ref{fig:ement2} that the choice between $\Gm$ and $\Gp$ is no longer sensitive
to the choice of $\alpha$; and $\Gp$ is never preferred to $\Gm$. This contrasts
especially the behaviour of BDeu in Figure \ref{fig:ement2}, where
$\E\left(\H^{\Gp}(X) \given \D\right)$ can be both larger and smaller than
$\E\left(\H^{\Gm}(X) \given \D\right)$ for different values of $\alpha$. \qed
\end{example}

It is easy to show that the theorem we just stated does not apply to K2 or BDJ,
since under their priors $\aijk$ is not a function of $q_i$; but it does apply
to BDla since its formulation is essentially a mixture of BDeu scores.

\section{Conclusions and Discussion}

Bayesian network learning follows an inherently Bayesian workflow in which we
first learn the structure of the DAG $\G$ from a data set $\D$, and then we
learn the values of the parameters $\T$ given $\G$. In this paper we studied
the properties of the Bayesian posterior scores used to estimate
$\Prob(\G \given \D)$ and to learn the $\G$ that best fits the data. For
discrete Bayesian networks, these scores are Bayesian Dirichlet (BD) marginal
likelihoods that assume different Dirichlet priors for the $\T$ and, in the most
general formulation, a hyperprior over the hyperparameters $\aijk$ of the
prior. We focused on the most common BD score, BDeu, which assumes a uniform
prior over each $\T$; and we studied the impact of that prior on structure
learning from a Bayesian and an information theoretic perspective. After deriving
the form of the posterior expected entropy for $\G$ given $\D$, we found that
BDeu may select models in a way that violates the maximum relative entropy
principle. Furthermore, we showed that it produces Bayes factors that are very
sensitive to the choice of the imaginary sample size. Both issues are related to
the uniform prior assumed by BDeu for the $\T$, and can lead to the selection
of overly dense DAGs when the data are sparse. In contrast, the BDs score
proposed in \citet{jmlr16} does not, even though it converges to BDeu
asymptotically; and neither do other BD scores in the literature. In the
simulation study we performed in \citet{jmlr16}, we found that BDs leads to
more accurate structure learning; hence we recommend its use over BDeu for
sparse data.



\begin{thebibliography}{46}
\providecommand{\natexlab}[1]{#1}
\providecommand{\url}[1]{{#1}}
\providecommand{\urlprefix}{URL }
\expandafter\ifx\csname urlstyle\endcsname\relax
  \providecommand{\doi}[1]{DOI~\discretionary{}{}{}#1}\else
  \providecommand{\doi}{DOI~\discretionary{}{}{}\begingroup
  \urlstyle{rm}\Url}\fi
\providecommand{\eprint}[2][]{\url{#2}}

\bibitem[{Aliferis et~al(2010{\natexlab{a}})Aliferis, Statnikov, Tsamardinos,
  Mani, and Koutsoukos}]{hiton1}
Aliferis CF, Statnikov A, Tsamardinos I, Mani S, Koutsoukos XD
  (2010{\natexlab{a}}) {Local Causal and Markov Blanket Induction for Causal
  Discovery and Feature Selection for Classification Part I: Algorithms and
  Empirical Evaluation}. Journal of Machine Learning Research 11:171--234

\bibitem[{Aliferis et~al(2010{\natexlab{b}})Aliferis, Statnikov, Tsamardinos,
  Mani, and Koutsoukos}]{hiton2}
Aliferis CF, Statnikov A, Tsamardinos I, Mani S, Koutsoukos XD
  (2010{\natexlab{b}}) {Local Causal and Markov Blanket Induction for Causal
  Discovery and Feature Selection for Classification Part II: Analysis and
  Extensions}. Journal of Machine Learning Research 11:235--284

\bibitem[{Anderson and Qiu(1997)}]{anderson}
Anderson G, Qiu S (1997) {A Monotonicity Property of the Gamma Function}.
  Proceedings of the American Mathematical Society 125(11):3355--3362

\bibitem[{Archer et~al(2014)Archer, Park, and Pillow}]{archer}
Archer E, Park IM, Pillow JW (2014) {Bayesian Entropy Estimation for Countable
  Discrete Distributions}. Journal of Machine Learning Research 15:2833--2868

\bibitem[{Berger(1985)}]{berger}
Berger JO (1985) {Statistical Decision Theory and Bayesian Analysis}, 2nd edn.
  Springer-Verlag

\bibitem[{Cano et~al(2013)Cano, G{\'o}mez-Olmedo, Masegosa, and Moral}]{bdla}
Cano A, G{\'o}mez-Olmedo M, Masegosa AR, Moral S (2013) {Locally averaged
  Bayesian Dirichlet Metrics for Learning the Structure and the Parameters of
  Bayesian Networks}. International Journal of Approximate Reasoning
  54(4):526--540

\bibitem[{Castelo and Siebes(2000)}]{csprior}
Castelo R, Siebes A (2000) {Priors on Network Structures. Biasing the Search
  for Bayesian Networks}. International Journal of Approximate Reasoning
  24(1):39--57

\bibitem[{Caticha(2004)}]{caticha2}
Caticha A (2004) Relative entropy and inductive inference. In: {Bayesian
  Inference and Maximum Entropy Methods in Science and Engineering}, pp 75--96

\bibitem[{Chickering(1995)}]{chickering}
Chickering DM (1995) {A Transformational Characterization of Equivalent
  Bayesian Network Structures}. In: {Proceedings of the 11th Conference on
  Uncertainty in Artificial Intelligence}, pp 87--98

\bibitem[{Chickering and Heckerman(2000)}]{engineering}
Chickering DM, Heckerman D (2000) {A Comparison of Scientific and Engineering
  Criteria for Bayesian Model Selection}. Statistics and computing 10:55--62

\bibitem[{Cooper and Herskovits(1991)}]{k2}
Cooper GF, Herskovits E (1991) {A Bayesian Method for Constructing Bayesian
  Belief Networks from Databases}. In: {Proceedings of the 7th Conference on
  Uncertainty in Artificial Intelligence}, pp 86--94

\bibitem[{Cussens(2012)}]{cutting}
Cussens J (2012) {Bayesian Network Learning with Cutting Planes}. In:
  {Proceedings of the 27th Conference on Uncertainty in Artificial
  Intelligence}, pp 153--160

\bibitem[{Dawid(1984)}]{dawid}
Dawid AP (1984) {Present Position and Potential Developments: Some Personal
  Views: Statistical Theory: The Prequential Approach}. Journal of the Royal
  Statistical Society Series A 147(2):278--292

\bibitem[{Feder(1986)}]{feder}
Feder M (1986) {Maximum Entropy as a Special Case of the Minimum Description
  Length Criterion}. IEEE Transactions on Information Theory 32(6):847--849

\bibitem[{Friedman et~al(1999)Friedman, Nachman, and Pe{\'e}r}]{sparse}
Friedman N, Nachman I, Pe{\'e}r D (1999) {Learning Bayesian Network Structure
  from Massive Datasets: The "Sparse Candidate" Algorithm}. In: {Proceedings of
  the 15th Conference on Uncertainty in Artificial Intelligence}, pp 206--21

\bibitem[{Geiger and Heckerman(1994)}]{heckerman3}
Geiger D, Heckerman D (1994) {Learning Gaussian Networks}. In: {Proceedings of
  the 10th Conference on Uncertainty in Artificial Intelligence}, pp 235--243

\bibitem[{Giffin and Caticha(2007)}]{caticha}
Giffin A, Caticha A (2007) {Updating Probabilities with Data and Moments}. In:
  {Proceedings of the 27th International Workshop on Bayesian Inference and
  Maximum Entropy Methods in Science and Engineering}, pp 74--84

\bibitem[{Hausser and Strimmer(2009)}]{shrinkent}
Hausser J, Strimmer K (2009) {Entropy Inference and the James-Stein Estimator,
  with Application to Nonlinear Gene Association Networks}. Journal of Machine
  Learning Resesearch 10:1469--1484

\bibitem[{Heckerman et~al(1995)Heckerman, Geiger, and Chickering}]{heckerman}
Heckerman D, Geiger D, Chickering DM (1995) {Learning Bayesian Networks: The
  Combination of Knowledge and Statistical Data}. Machine Learning
  20(3):197--243, available as Technical Report MSR-TR-94-09

\bibitem[{Johnson et~al(1997)Johnson, Kotz, and Balakrishnan}]{kotz}
Johnson NL, Kotz S, Balakrishnan N (1997) {Discrete Multivariate
  Distributions}. Wiley

\bibitem[{Koller and Friedman(2009)}]{koller}
Koller D, Friedman N (2009) {Probabilistic Graphical Models: Principles and
  Techniques}. MIT Press

\bibitem[{Koski and Noble(2012)}]{koski}
Koski TJT, Noble JM (2012) {A Review of Bayesian Networks and Structure
  Learning}. Mathematica Applicanda 40(1):53--103

\bibitem[{Lam and Bacchus(1994)}]{lam}
Lam W, Bacchus F (1994) {Learning Bayesian Belief Networks: an Approach Based
  on the MDL Principle}. Computational Intelligence 10:269--293

\bibitem[{Lauritzen and Wermuth(1989)}]{lauritzen}
Lauritzen SL, Wermuth N (1989) {Graphical Models for Associations Between
  Variables, Some of Which are Qualitative and Some Quantitative}. The Annals
  of Statistics 17(1):31--57

\bibitem[{Mackay(2003)}]{mackay}
Mackay DJC (2003) Information Theory, Inference and Learning Algorithms.
  Cambridge University Press

\bibitem[{Miller(1955)}]{miller}
Miller GA (1955) {Note on the Bias of Information Estimates}. In: {Information
  Theory in Psychology II-B}, Free Press, pp 95--100

\bibitem[{Mukherjee and Speed(2008)}]{mukherjee}
Mukherjee S, Speed TP (2008) {Network Inference Using Informative Priors}.
  Proceedings of the National Academy of Sciences 105(38):14,313--14,318

\bibitem[{Nemenman et~al(2002)Nemenman, Shafee, and Bialek}]{hnsb}
Nemenman I, Shafee F, Bialek W (2002) {Entropy and Inference, Revisited}. In:
  {Proceedings of the 14th Advances in Neural Information Processing Systems
  (NIPS) Conference}, pp 471--478

\bibitem[{Pearl(1988)}]{pearl}
Pearl J (1988) {Probabilistic Reasoning in Intelligent Systems: Networks of
  Plausible Inference}. Morgan Kaufmann

\bibitem[{Pearl(2009)}]{causality}
Pearl J (2009) {Causality: Models, Reasoning and Inference}, 2nd edn. Cambridge
  University Press

\bibitem[{Rissanen(1978)}]{mdl}
Rissanen J (1978) {Modeling by Shortest Data Description}. Automatica
  14(5):465--568

\bibitem[{Rissanen(2007)}]{rissanen}
Rissanen J (2007) {Information and Complexity in Statistical Models}. Springer

\bibitem[{Russell and Norvig(2009)}]{norvig}
Russell SJ, Norvig P (2009) {Artificial Intelligence: a Modern Approach}, 3rd
  edn. Prentice Hall

\bibitem[{Scutari(2016)}]{jmlr16}
Scutari M (2016) {An Empirical-Bayes Score for Discrete Bayesian Networks}.
  Journal of Machine Learning Research (Proceedings Track, PGM 2016)
  52:438--448

\bibitem[{Scutari and Denis(2014)}]{crc13}
Scutari M, Denis JB (2014) {Bayesian Networks with Examples in R}. Chapman \&
  Hall

\bibitem[{Shore and Johnson(1980)}]{shore}
Shore JE, Johnson RW (1980) {Axiomatic Derivation of the Principle of Maximum
  Entropy and the Principle of Minimum Cross-Entropy}. IEEE Transactions on
  Information Theory IT-26(1):26--37

\bibitem[{Silander et~al(2007)Silander, Kontkanen, and
  Myllym{\"a}ki}]{silander}
Silander T, Kontkanen P, Myllym{\"a}ki P (2007) {On Sensitivity of the MAP
  Bayesian Network Structure to the Equivalent Sample Size Parameter}. In:
  {Proceedings of the 23rd Conference on Uncertainty in Artificial
  Intelligence}, pp 360--367

\bibitem[{Skilling(1988)}]{skilling}
Skilling J (1988) {The Axioms of Maximum Entropy}. In: {Maximum-Entropy and
  Bayesian Methods in Science and Engineering}, pp 173--187

\bibitem[{Steck(2008)}]{alphastar}
Steck H (2008) {Learning the Bayesian Network Structure: Dirichlet Prior versus
  Data}. In: {Proceedings of the 24th Conference on Uncertainty in Artificial
  Intelligence}, pp 511--518

\bibitem[{Steck and Jaakkola(2003)}]{jaakkola}
Steck H, Jaakkola TS (2003) {On the Dirichlet Prior and Bayesian
  Regularization}. In: {Advances in Neural Information Processing Systems 15},
  pp 713--720

\bibitem[{Suzuki(2015)}]{suzuki15}
Suzuki J (2015) {Consistency of Learning Bayesian Network Structures with
  Continuous Variables: An Information Theoretic Approach}. Entropy
  17:5752--5770

\bibitem[{Suzuki(2016)}]{suzuki16}
Suzuki J (2016) {A Theoretical Analysis of the BDeu Scores in Bayesian Network
  Structure Learning}. Behaviormetrika 44:97--116

\bibitem[{Tsamardinos et~al(2006)Tsamardinos, Brown, and Aliferis}]{mmhc}
Tsamardinos I, Brown LE, Aliferis CF (2006) {The Max-Min Hill-Climbing Bayesian
  Network Structure Learning Algorithm}. Machine Learning 65(1):31--78

\bibitem[{Ueno(2010)}]{ueno}
Ueno M (2010) {Learning Networks Determined by the Ratio of Prior and Data}.
  In: {Proceedings of the 26th Conference on Uncertainty in Artificial
  Intelligence}, pp 598--605

\bibitem[{Ueno(2011)}]{ueno2}
Ueno M (2011) {Robust Learning of Bayesian Networks for Prior Belief}. In:
  {Proceedings of the 27th Conference on Uncertainty in Artificial
  Intelligence}, pp 698--707

\bibitem[{Weatherburn(1961)}]{weatherburn}
Weatherburn CE (1961) {A First Course in Mathematical Statistics}. Cambridge
  University Press

\end{thebibliography}

\end{document}